\documentclass[12pt,reqno]{amsart}
\usepackage{amscd,amssymb,amsmath,amsthm}
\usepackage{mathtools}
\usepackage{enumerate}
\usepackage[english]{babel}
\usepackage[T1]{fontenc}
\usepackage[arrow,matrix]{xy}
\usepackage{graphicx}
\usepackage{tikz}
\usetikzlibrary{positioning,matrix,arrows,calc}
\usepackage{cite}
\usepackage{dsfont}
\topmargin=0.1in \textwidth5.8in \textheight7.8in

\newtheorem{theorem}{Theorem}
\newtheorem{lemma}{Lemma}[section]
\newtheorem{proposition}[lemma]{Proposition}
\newtheorem{definition}[lemma]{Definition}
\newtheorem{remark}[lemma]{Remark}

\begin{document}
\title[]{Symmetric homoclinic tangles in reversible dynamical systems have positive topological entropy}

\author{A.J. Homburg, J.S.W. Lamb, D.V. Turaev}

\address{A.J. Homburg\\ KdV Institute for Mathematics, University of Amsterdam, Science park 107, 1098 XG Amsterdam, Netherlands\newline Mathematics Institute, Leiden University, 
Einsteinweg 55,
2333 CC Leiden, Netherlands}
\email{a.j.homburg@uva.nl}

\address{J.S.W. Lamb\\  Department of Mathematics,
	Imperial College London,
	180 Queen's Gate,
	London SW7 2AZ,
	United Kingdom\newline
	International Research Center for Neurointelligence (IRCN), The University of Tokyo, Tokyo 
113-0033, Japan\newline
Centre for Applied Mathematics and Bioinformatics, Department of Mathematics and Natural Sciences, Gulf University for Science and Technology, Halwally, 32093 Kuwait.
	}
\email{jsw.lamb@imperial.ac.uk}

\address{D.V. Turaev\\  Department of Mathematics,
	Imperial College London,
	180 Queen's Gate,
	London SW7 2AZ,
	United Kingdom}
\email{d.turaev@imperial.ac.uk}

\begin{abstract}
We consider reversible vector fields in $\mathbb{R}^{2n}$ such that the set of fixed points of the involutory reversing symmetry is $n$-dimensional. Let such system have  
a smooth one-parameter family of symmetric periodic orbits which is of saddle type in normal directions. We establish that the topological entropy is positive 
when the stable and unstable manifolds of this family of periodic orbits have a strongly-transverse intersection.  
\end{abstract}

\maketitle

\section{Introduction}

It is a classical result in the theory of dynamical systems that homoclinic tangles give rise to
hyperbolic horseshoes and thus positive topological entropy. 
The history of chaotic dynamics started with the discovery by Poincar\'e \cite{poin} that the stable and unstable manifolds of a saddle periodic orbit may have a transverse intersection along a homoclinic orbit.
For a sufficiently small neighbourhood of the union of a hyperbolic periodic orbit and its transverse homoclinic, the invariant set that consists of all orbits that
stay entirely in this neighbourhood is uniformly hyperbolic and admits a symbolic representation by a full shift on two symbols \cite{sma65,shi67}.
This result, the Shilnikov-Smale theorem, provides the most fundamental criterion for chaos in a dynamical system.

The fact that the Poincar\'e's homoclinic tangle implies positive topological entropy holds true also in the  
original Hamiltonian setting.  A subtle point here is that the Hamiltonian function is a first integral, and saddle periodic orbits of a Hamiltonian system arise in families, parameterised by the value of the Hamiltonian. Such family is a normally-hyperbolic invariant manifold; the homoclinic tangle corresponds to an intersection of its stable and unstable manifolds. Formally speaking, each periodic orbit in the family is not hyperbolic. However, inside any dynamically invariant level set of the Hamiltonian, the saddle periodic orbit is isolated and hyperbolic with a transverse homoclinic, so the Shilnikov-Smale theorem is applied and the positivity of the topological entropy follows.

Normally-hyperbolic one-parameter families of periodic orbits with transversely intersecting stable and unstable manifolds also naturally arise in reversible systems\cite{homlam06}. Despite the substantial interest in reversible dynamical systems \cite{lamrob96,baretal19,becetal09,cha98,dev76,gon13,gonetal14,her95,har98,homkno06,homlam06,lamste04,roblam95,
sev86,gt18}, a concise characterisation of a reversible homoclinic tangle, which we believe deserves to be 
central to the theory of chaotic dynamics in reversible systems, has been lacking.

The core issue here is that reversible systems do not need to be Hamiltonian and, typically, there exists no first integral.
For example, if a perturbation of a reversible Hamiltonian system preserves the reversibility but breaks the Hamiltonian structure, then a given family of symmetric periodic orbits and their symmetric homoclinics survives the
perturbation. However, the dynamically invariant foliation by energy levels gets, typically, destroyed, as the energy is no longer conserved. This provides the possibility that many orbits leave a neighbourhood of the homoclinic tangle due to the drift in energy, which makes the dynamics near a reversible homoclinic tangle very much different from those in the Hamiltonian setting.

The a priori non-controllable drift along the central direction means that one should go beyond the standard hyperbolicity techniques to resolve even the most basic question -- whether the dynamics near the reversible tangle are chaotic? In this paper, we give an affirmative answer by proving that the set of orbits that remain in any given neighbourhood of the reversible homoclinic tangle (satisfying transversality conditions) has positive topological entropy (see Theorem~\ref{t:main}).\\
 
Let us describe the setting in detail. We consider the flow of a smooth vector field
\[
\dot{x} = f(x)
\]
on $\mathbb{R}^{2n}$, $n \ge 2$, which is reversible with respect to a smooth involution 
$R:\mathbb{R}^{2n} \to \mathbb{R}^{2n}$ with an $n$-dimensional manifold of fixed points. 
So 
\[
 D_xR f (x) = - f(R x)
\]
and $\mathrm{Fix}\, R = \{ u \in \mathbb{R}^{2n}\; ; \; R u = u\}$ is $n$-dimensional.

Recall that a periodic orbit $\gamma$ is symmetric if $R\gamma = \gamma$ as a set. 
A symmetric periodic orbit $\gamma$ intersects $\mathrm{Fix}\, R$ transversely in two points.
Consider a symmetric $(2n-1)$-dimensional cross-section $S = RS$ through one of the two points
of $\gamma \cap \mathrm{Fix}\, R$.
Since $S=RS$, the intersection $S \cap \mathrm{Fix}\, R$ is $n$-dimensional, so its image
under the first-return map to $S$ is also $n$-dimensional. The dimension count shows  
that transverse intersections of this image with $S \cap \mathrm{Fix}\, R$ occur along
one-dimensional curves, hence symmetric periodic orbits generically arise in one-parameter families \cite{dev76}. 

Thus, we consider a one-parameter family of symmetric periodic orbits
$\gamma_a = R\gamma_a$ which are of saddle type in the normal directions, parameterised by a real parameter $a$ that runs some interval $(a_-,a_+)$ that contains zero.
When we use the parameter $a$ in our notation, it is understood refer to such an interval. 
This family is a normally-hyperbolic invariant manifold (a two-dimensional cylinder)
for the flow of $f$.
Let $V^s( \gamma_a)$ denote the $n$-dimensional stable manifold of the periodic orbit $\gamma_a$ and $V^u(\gamma_a)$ its $n$-dimensional unstable manifold. By the reversibility, $V^u(\gamma_a)= R V^s( \gamma_a)$.
Assume that $V^{u} (\gamma_0)$ {\em has a transverse intersection with} $\mathrm{Fix}\, R$ at a point $q_0$
outside of $\gamma_0$. By symmetry, $V^{s}(\gamma_0)$ has a transverse intersection with $\mathrm{Fix}\, R$ at the same
point. The invariant manifolds $V^{s}(\gamma_0)$ and $V^{u}(\gamma_0)$ consist of the whole orbits of
the system $\dot x=f(x)$, so the tangents to $V^{s}(\gamma_0)$ and $V^{u}(\gamma_0)$ at the point $q_0$ both contain the
vector $f(q_0)$. We assume, as a non-degeneracy condition, that {\em the intersection of the tangents contains no other directions}.

The orbit $\eta_0$ of $q_0$ lies both in $V^{s}(\gamma_0)$ and $V^{u}(\gamma_0)$, so it tends to $\gamma_0$ both in forward and backward time, i.e., it is homoclinic to $\gamma_0$. Since it is an orbit of a point in $\mathrm{Fix}\, R$, it is symmetric, i.e.~$\eta_0=R\eta_0$. When the above transversality assumptions hold, we call $\eta_0$ a {\em strongly transverse} symmetric homoclinic connection. Since compact parts of stable and unstable manifolds of $\gamma_a$ vary smoothly with $a$, we obtain a one-parameter family of symmetric homoclinic connections $\eta_a = R \eta_a$ in $V^{u} (\gamma_a) \cap V^{s} (\gamma_a)$, defined for all $a$ in some neighbourhood $(a_-,a_+)$ of $a=0$, constituting a {\em symmetric homoclinic tangle}.  See Figure~\ref{f:homoclinic} for a sketch (in Section~\ref{s:pte} we work with different local cross-sections  containing symmetric periodic points and symmetric homoclinic points). 
\begin{figure}[ht]
	\begin{center}
\input{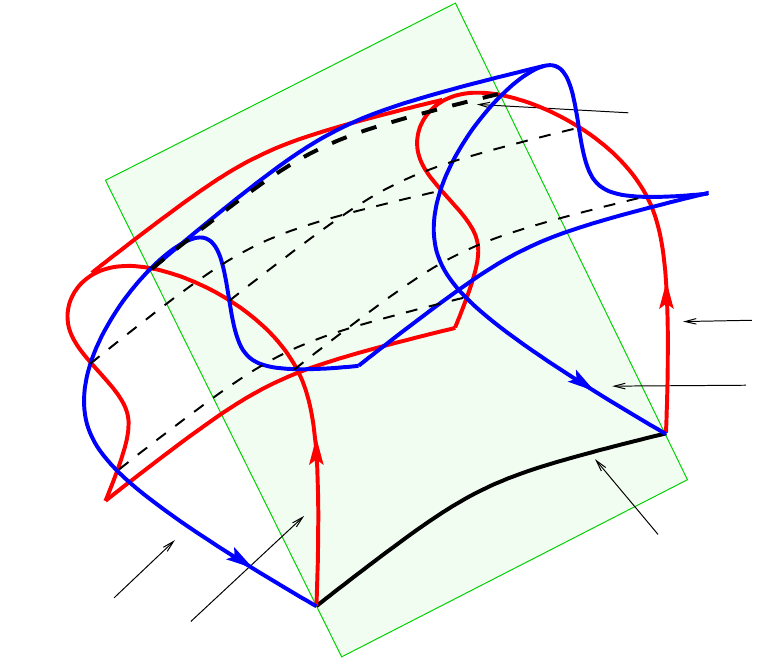_t}
	\end{center}
	\caption[]{Sketch of a homoclinic tangle, here depicted inside a three-dimensional cross-section $S$. 
The family $\{\gamma_a\}$ 
of symmetric periodic orbits, with $a$ from some interval $(a_-,a_+)$,  intersects $\mathrm{Fix}\,R$ inside  $S$, drawn as a solid curve.  Each symmetric periodic orbit $\gamma_a$ has stable and unstable manifolds  $V^s( \gamma_a)$, $V^u(\gamma_a)$. The intersections of these stable and unstable manifolds for the family $\{\gamma_a\}$ with $S$ form two-dimensional sheets: a stable sheet $V^s(\{\gamma_+a\}) \cap S$ bounded by  
 $V^s( \gamma_{a_-})\cap S$ and $V^s( \gamma_{a_+})\cap S$, an unstable sheet $V^u(\{\gamma_+a\}) \cap S$ bounded by  
 $V^u( \gamma_{a_-})\cap S$ and $V^u( \gamma_{a_+})\cap S$.
The sheets  $V^s( \{\gamma_a\})$, $V^u(\{\gamma_a\})$ intersect in $\mathrm{Fix}\,R$ inside $S$, not only in the family of periodic orbits, but also in a family of symmetric homoclinic points $\{\eta_a\} \cap S$. There are further intersections of the stable and unstable sheets outside  $\mathrm{Fix}\,R$, a few are represented by dashed curves.
%
%
		\label{f:homoclinic}}
\end{figure}

Our main result is
\begin{theorem}\label{t:main}
	Consider the flow of a smooth reversible vector field on $\mathbb{R}^{2n}$ that is reversible with respect to an involution
	with $n$-dimensional fixed point manifold.
	Moreover, let there be a normally hyperbolic family of symmetric periodic orbits whose
	stable and unstable manifolds intersect in a family of strongly transverse symmetric homoclinic orbits. Let $\gamma_0$ be a symmetric periodic orbit from this
	family and $\eta_0$ the corresponding symmetric homoclinic
	orbit to $\gamma_0$. Then the flow of $f$ restricted to  any neighbourhood of the union of $\eta_0$ and $\gamma_0$ has positive topological entropy. 
\end{theorem}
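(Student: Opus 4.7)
The plan is to build, inside any prescribed neighbourhood $U$ of $\gamma_0\cup\eta_0$, a topological horseshoe whose orbits are all $R$-symmetric. Restricting to symmetric orbits is crucial: the single uncontrolled center direction transverse to $V^{s/u}(\gamma_a)$ prevents the classical Shilnikov--Smale horseshoe from closing up, but along symmetric orbits the would-be drift over infinite time is pinned down by a finite-time matching condition on $\mathrm{Fix}\,R$.

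First I would choose an $R$-invariant cross-section $S$ through a point of $\gamma_0\cap\mathrm{Fix}\,R$ and apply $R$-equivariant normally hyperbolic invariant manifold theory to put the local first-return map into a skew-product form
\[
T_{loc}:(\xi,\eta,a)\mapsto (A(a)\xi+\cdots,\;B(a)\eta+\cdots,\;a+\cdots),\qquad R|_S(\xi,\eta,a)=(\eta,\xi,a),
\]
with $\xi,\eta\in\mathbb{R}^{n-1}$ the hyperbolic coordinates, $a\in\mathbb{R}$ the center coordinate, and the $n$-dimensional symmetric disc $D_0:=\mathrm{Fix}\,R\cap S$ identified with $\{\xi=\eta\}$. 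Following $\eta_0$, I would build an $R$-equivariant global transition diffeomorphism $G$ between neighbourhoods of the local unstable leaf of $\gamma_0$ and the local stable leaf of $\gamma_0$ inside $S$; the strong transversality at $q_0$ guarantees that $G$ maps the unstable foliation across the stable foliation in the classical Shilnikov--Smale manner. For each large $N$ the map $F_N:=G\circ T_{loc}^N$ is then the natural ``return-and-wrap'' map whose restriction to any level $\{a=\text{const}\}$ would be a Smale horseshoe if that level were $G$-invariant.

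To deal with the non-invariance of $\{a=\text{const}\}$, I would restrict the search for orbits to the symmetric disc $D_0$. Since any orbit of the flow meeting $\mathrm{Fix}\,R$ is automatically $R$-symmetric, for each binary word $\omega=(\omega_1,\ldots,\omega_k)$ a point $p\in D_0$ whose $F_N$-trajectory $p,\,F_N p,\ldots,F_N^k p$ realises $\omega$ and satisfies the matching condition $F_N^k p\in D_0$ extends, via $R$, to a bi-infinite symmetric orbit of the flow with palindromic itinerary. Distinct words produce distinct orbits inside $U$, so exhibiting $2^k$ such points $p$ for each large $k$ forces the topological entropy to be at least $(\log 2)/(2N\tau)>0$, where $\tau$ is an upper bound on the time between successive returns.

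The main obstacle, and the technical heart of the proof, is to show that $F_N^k(D_0)$ intersects $D_0$ in at least $2^k$ transverse points distributed across all $2^k$ itineraries. I expect to obtain this via an $R$-equivariant inclination lemma: forward iterates of $D_0$ under $T_{loc}$ accumulate on the local unstable manifold of the cylinder in an $R$-symmetric fashion (because $D_0\subset\{\xi=\eta\}$), $G$ then wraps the stretched image across the local stable manifold in an $R$-symmetric fold, and one proceeds by induction on $k$. The crucial role of reversibility is that the would-be center drift over one half-period is pinned down by the single endpoint condition $F_N^k p\in D_0$, converting a problem that is ill-posed for general orbits into an implicit function problem on the compact $n$-disc $D_0$ whose solvability can be read off from the transverse intersection of $V^u(\gamma_0)$ with $\mathrm{Fix}\,R$ at $q_0$. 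Carrying out this induction with precise $C^1$-control of the folded discs is the technical bulk of the argument.
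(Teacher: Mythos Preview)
Your reduction to an $R$-symmetric cross-section $S$ with coordinates $(\xi,\eta,a)$, $R|_S(\xi,\eta,a)=(\eta,\xi,a)$, and the return map $F_N=G\circ T_{loc}^N$ is exactly the framework the paper sets up. Seeking symmetric periodic orbits as pairs $p\in D_0$, $F_N^k(p)\in D_0$ is also natural.

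The gap lies in the assertion that ``the would-be center drift over one half-period is pinned down by the single endpoint condition $F_N^k p\in D_0$.'' The disc $D_0=\{\xi=\eta\}$ has codimension $n-1$ in $S$, so $F_N^k(p)\in D_0$ amounts to $n-1$ scalar equations; these fix the hyperbolic coordinates but say nothing about the center coordinate $a$. For a prescribed word $\omega$, write $a_j$ for the center value at the $j$-th return. Reversibility relates the drift along $\omega$ to the drift along the reversed word, but places \emph{no} constraint on the drift along $\omega$ itself. If the composition of fiber maps along $\omega$ has average drift $\delta\neq 0$ per step---and nothing rules this out for a generic non-palindromic block---then every candidate orbit satisfies $a_k\approx a_0+k\delta$, and for $k$ large no choice of $a_0$ keeps all intermediate $a_j$ in the fixed window. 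The component of $F_N^k(D_0)\cap D_0$ with itinerary $\omega$ is then empty and your $2^k$ count collapses. The paper notes explicitly that the fiber over a symmetric periodic code of high period may carry no bounded orbit at all; the transversality of $V^u(\gamma_0)$ with $\mathrm{Fix}\,R$ at $q_0$ governs only the hyperbolic matching and gives no control on how long the center coordinate remains in range.

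The paper resolves this by a different mechanism. After reducing to a skew product $(\omega,z)\mapsto(\sigma\omega,f_\omega(z))$ of interval maps over the full two-shift, it argues by dichotomy. Either every orbit of $z=0$ stays bounded---so the maximal invariant set projects onto the full shift and positive entropy is immediate---or some orbit drifts out, and from that drift one extracts a \emph{nonsymmetric} homoclinic leaf whose scattering map $\psi$ is not the identity. Reversibility then supplies $\psi^{-1}$ on the $R$-image leaf. The iterated function system generated by $\psi$ and $\psi^{-1}$ can push $z$ in either direction, and a combinatorial argument (Lemma~\ref{l:ifs} and the good-cylinder count that follows it) produces exponentially many words keeping $z$ in a fixed interval. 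The drift, rather than being suppressed by symmetry, is the \emph{source} of the steering maps in the hard case; this is the idea missing from your outline.
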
	

The proof of this result entails the study of skew product systems of interval diffeomorphisms over a horseshoe.
We reduce to a study of the recurrent set of a return map. This set is contained \cite{homlam06} in a lamination
homeomorphic to $\Lambda \times J$ for a horseshoe $\Lambda$ and an interval $J = (-1,1)$.
The dynamics on the lamination is topologically conjugate to a skew product map $F (x,y) = (f(x),  g_x(y))$ of interval diffeomorphisms $g_x$ over a horseshoe map $f$ (in restriction to $\Lambda$, the map $f$ acts as the shift operator 
on the space of 2-symbol sequences). The reversibility of the system translates to the existence of an involution $\hat R$
of the $x$-space such that $\hat R(\Lambda)=\Lambda$,
$f\circ \hat R=\hat R\circ f^{-1}$ and $g_x(y)=g_{\hat R(x)}(y)^{-1}$.
For a Hamiltonian system the derived skew-product system would be
$(x,y) \mapsto (f(x), y)$ and the recurrent set a one-parameter family of horseshoes. For reversible systems, 
the fiber maps of the skew product system need not be the identity map. 
In trivial examples of skew product maps such as $(x,y) \mapsto (x, y + \varepsilon)$ all orbits would drift 
outside of $\Lambda \times J$ and the recurrent set would be empty. However, such example cannot be reversible:  we show that reversibility implies that {\em enough orbits always stay inside} $\Lambda \times J$ to yield positive topological entropy.

Note that we do not establish the existence of finite-type shift dynamics (i.e., a topological conjugacy or semi-conjugacy to a non-trivial Markov chain on an invariant subset of $\Lambda\times J$) which are often associated with positive topological entropy. 
The emergence of shift dynamics after a $C^1$-small perturbation was established in \cite{homlam06}. However, finite-type shift dynamics does not always exist here in spite of the positivity of topological entropy. In particular, it is easy to construct an example of a $C^\infty$-smooth reversible skew-product map $F (x,y) = (f(x),  g_x(y))$ such that for every periodic fiber whose orbit by $F$ is not $\hat R$-symmetric there is a non-zero drift in $y$. This means no non-symmetric periodic orbits in $\Lambda\times J$, which implies no semi-conjugacy to a non-trivial Markov chain in such example.
 
We prove Theorem \ref{t:main} by showing that if $\Lambda \times J$ does not contain an invariant subset where 
the skew-product map $F$ generated by a sufficiently smooth reversible flow is semi-conjugate to a Markov chain
with positive topological entropy, then there always exists a periodic fiber for which the drift in $y$ is positive with a margin of safety,
and, by reversibility, a periodic fiber for which the drift in $y$ is negative. In the spirit of \cite{TZ10} (where the positivity of
space-time entropy for a class of PDEs was proven) we show that the latter case also corresponds to positive entropy.

As an example of an application of Theorem~\ref{t:main}, we mention that symmetric homoclinic tangles of the type we consider may arise locally near homoclinic loops to symmetric equilibria. This includes homoclinic bellows \cite{homkno06} and a homoclinic
loop to a saddle-focus~\cite{baretal19,har98} -- in both cases there exists a symmetric homoclinic tangle, so Theorem~\ref{t:main} implies the positivity of the topological entropy. Note that for the Poincar\'e map of a reversible flow near a symmetric homoclinic loop to a saddle-focus, the positivity of the topological entropy
(and a semi-conjugacy to shift dynamics) is proven in \cite{baretal19}. However, the return time is not bounded for this Poincar\'e map, so proving the positive topological entropy for the flow itself requires Theorem~\ref{t:main} in this situation.

Non-Hamiltonian reversible vector fields with symmetric homoclinic tangles arise in the study of pattern formation in certain classes of partial differential equations \cite{cha98,san02}. For example, for the partial differential equations of the reaction-diffusion type
\[
 u_t =A u_{xx} + N(u), \;   x\in \mathbb{R},   \; u\in U,
\]
where $U$ is an appropriate Banach space of functions $u$ depending on $x\in\mathbb{R}$, a stationary solution
satisfies the ODE 
 \[
 u''(x) = - A^{-1} N(u).
 \]
This equation is invariant under the transformation $x \mapsto -x$, i.e., it is reversible. The involution $R$ acts as 
$u'\to -u'$; its set of fixed points is $\{u'=0\}$ and is half of the dimension of the phase space of the ODE (the space of pairs 
$(u,u')$). Thus, Theorem~\ref{t:main} is applicable. It provides a characterisation of the complexity of the set of solutions near a family of reflection-symmetric solutions that are asymptotically spatially periodic with a localized "defect": the number of different patterns that materialise in a finite spatial window grows exponentially with the window's size. 

Another natural setting of non-Hamiltonian reversible dynamical systems where Theorem~\ref{t:main} may be applied, is that of mechanical systems with non-holonomic constraints. If the system is defined by a Lagrangian $L(\mathbf{q},\mathbf{\dot q})$ with a single constraint $\mathbf{a}(\mathbf{q})\mathbf{\cdot \dot q} =0$, then the equations of motion derived from the
d'Alembert principle are
$$\frac{d}{dt} \frac{\partial L}{\partial \mathbf{\dot q}} - \frac{\partial L}{\partial \mathbf{q}} = 
\mu(t) \mathbf{a}(\mathbf{q}),$$
where the factor $\mu$ is such that the equations are consistent with the constraint at each moment of time. This system preserves the energy $E= L - \frac{\partial L}{\partial \mathbf{\dot q}}\mathbf{\cdot \dot q} $, but it is not Hamiltonian in general (e.g., the phase volume does not need to be preserved). However, when the Lagrangian $L$ is an even function of the velocity vector $\mathbf{\dot q}$, the imposition of the constraint keeps the reversibility in tact. If the space of coordinates 
$\mathbf{q}$ is $(n+1)$-dimensional, then we have $(n+1)$ coordinates and $(n+1)$ velocity components subject to 2 constraints -- the velocity constraint and the energy constraint. Thus, the dimension of the phase space for the system at a fixed energy level is $2n$. The  set of the fixed points of the involution $R: \mathbf{\dot q} \to - \mathbf{\dot q}$ is given by the equation $\{\mathbf{\dot q}=0, \; L(\mathbf{q},0)= E\}$ and has dimension $n$ (i.e., half of the dimension of the phase space) if the energy is in the range of values of $L(\mathbf{q},0)$. One concludes that generic reversible Lagrangian systems with one velocity constraint fall in the class we consider in this paper. An example where Theorem~\ref{t:main} may be applicable
is given by a Chaplygin sleigh \cite{car,chap} moving on a generic surface.

If a non-holonomic mechanical system is symmetric with respect to a continuous group acting on the configuration space, the symmetry reduction decreases the dimension of the configuration space and, hence, the dimension of $\mathrm{Fix}\,(R)$, as
one can see in the examples of rolling spheres \cite{her95} and rattlebacks \cite{gon13}. Adding more velocity constraints increases the dimension of $\mathrm{Fix}\,(R)$ relative to the dimension of the phase space. Thus, one obtains examples of mechanical systems where 
$\mathrm{dim}\, \mathrm{Fix}\,(R)$ is strictly less or greater than half the phase space dimension. In the latter case, the symmetric periodic orbits go in families that depend on more than one parameter. The question of whether symmetric homoclinic
tangles involving such families of periodic orbits {\em always} yield positive topological entropy remains open.

\section{Positive topological entropy}\label{s:pte}

The proof of Theorem~\ref{t:main} involves the study of dynamics of a return map on a cross section. 
It relies on technical results about invariant laminations, reductions to skew product dynamics and estimates on the dynamics in fibers of the skew product system. In this section we prove  Theorem~\ref{t:main}, with reference to auxiliary technical results in later sections.  
 
Consider a symmetric point $p_0$ in $\gamma_0 \cap \mathrm{Fix}\, R$ and a symmetric cross section
$S_0 = R S_0$ containing $p_0$. The family of symmetric periodic solutions
$\{\gamma_a\}$ forms a sheet that intersects $S_0$ in a curve of points $p_a$.

Let $\mathcal{U}$ be a small neighbourhood of $\gamma_0 \cup \eta_0$.  We only consider orbits inside $\mathcal{U}$.
Consider a second symmetric local cross section $S_1 = R S_1$  through the point $q_0$ which is the intersection of  $\eta_0$ with $\mathrm{Fix}\,R$. 
Write $q_a = \eta_a \cap \mathrm{Fix}\,R$ in $S_1$
and let $S = S_0\cup S_1$. A first return map $\Pi: S \to S$ is defined as 
$\Pi (x) = \varphi_t (x)$ with $t>0$ the smallest positive number
for which $\varphi_t (x) \in S$ such that $\varphi_s (x) \in \mathcal{U}$
for all $0\le s \le t$.

We write $\Pi : S \to S$ even though  $\Pi$ is defined only on an open subset of $S$
that has components in  both $S_0$ and $S_1$. 
Restricting the flow to a small neighbourhood $\mathcal{U}$ of  $\gamma_0 \cup \eta_0$ also
means that we consider the families $\{ p_a\}$ and $\{ q_a\}$ for values of $a$ from a small neighbourhood of $0$. 

We restate Theorem~\ref{t:main} in terms of the return map $\Pi:S \to S$.

\begin{theorem}\label{t:mainpi}
The first return map $\Pi:S \to S$	has positive topological entropy.
\end{theorem}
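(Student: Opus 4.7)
The plan is to exploit the reduction of $\Pi$ to a skew-product dynamical system. In a sufficiently small neighbourhood $\mathcal{U}$ of $\gamma_0\cup\eta_0$, the set of orbits that stay in $\mathcal{U}$ for all iterates forms, by the construction of \cite{homlam06}, a normally hyperbolic invariant lamination homeomorphic to $\Lambda\times J$ on which $\Pi$ is topologically conjugate to a skew product $F(x,y)=(f(x),g_x(y))$. Here $f$ acts on the two-symbol horseshoe $\Lambda$ as the shift, each $g_x$ is a smooth interval diffeomorphism of $J$, and the reversing involution $R$ descends to an involution $\hat R$ of $\Lambda$ with $f\circ\hat R=\hat R\circ f^{-1}$ and fiber maps tied by $g_x=g_{\hat R(x)}^{-1}$. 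It therefore suffices to prove that $F$ has positive topological entropy on the set of orbits that never leave $\Lambda\times J$.

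I would then argue by dichotomy. For an $f$-periodic point $x\in\Lambda$ of period $N$, let $\Delta(x,y)$ denote the fiber drift $g_{f^{N-1}(x)}\circ\cdots\circ g_x(y)-y$; the reversibility relation forces drifts of opposite sign along $x$ and $\hat R(x)$. Either (A) $F$ already has an invariant subset semi-conjugate to a non-trivial subshift of finite type, in which case positive topological entropy is automatic, or (B) it does not. The key ingredient to be established separately is that in case (B) there exist a periodic point $x_+\in\Lambda$, a subinterval $J_+\subset J$, and $\delta>0$ such that $\Delta(x_+,\cdot)\ge\delta$ on $J_+$, so that by reversibility $x_-:=\hat R(x_+)$ together with the symmetric subinterval $J_-$ carries drift at most $-\delta$.

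Given $x_\pm$, $\delta$ and the subintervals $J_\pm$, I would close the argument by a construction modelled on \cite{TZ10}. Pass to a common period $N$ of $x_\pm$ and work with $F^N$; two-sided symbol sequences $\omega\in\{+,-\}^{\mathbb{Z}}$ then encode concatenations of $x_+$-blocks and $x_-$-blocks. Hyperbolic shadowing inside $\Lambda$ realizes each $\omega$ as a genuine base orbit, while in the fiber the accumulated drift after $k$ blocks is approximately $\sigma_k\delta$, with $\sigma_k$ the signed excess of $+$ over $-$ symbols among the first $k$. The shadowed orbit remains in $\Lambda\times J$ precisely when $|\sigma_k|\le K$ for all $k$, where $K$ is of order $|J|/\delta$, and the number of length-$n$ symbol sequences satisfying such a bounded-walk constraint grows exponentially in $n$. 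This produces a semi-conjugacy from a positive-entropy subshift onto an invariant subset of $F^N$, yielding positive topological entropy for $F^N$, hence for $F$ and for $\Pi$.

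The main obstacle is the quantitative margin-of-safety statement in case (B): converting the mere absence of a non-trivial Markov semi-conjugacy into the existence of a periodic fiber whose drift is uniformly bounded away from zero on a definite subinterval of $J$. A priori the drift could vanish along every periodic fiber, or be positive only along sequences of periodic points with drifts and subintervals shrinking to zero. Ruling this out forces the simultaneous use of the smoothness of the flow, of the precise reversibility relation $g_x=g_{\hat R(x)}^{-1}$, and of the non-existence of continuous $F$-invariant graphs over $\Lambda$; this is what must be carried out in the technical sections that follow. Once that input is secured, the symbolic counting above delivers Theorem~\ref{t:mainpi}.
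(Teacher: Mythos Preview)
Your plan reproduces the high-level strategy stated in the paper's introduction, but the actual proof in Section~2 takes a different concrete route at both pivot points, and in doing so sidesteps exactly the obstacle you flag.

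First, the dichotomy. You split on whether a semi-conjugacy to a non-trivial subshift already exists, and then face the task of extracting a periodic fiber with uniformly nonzero drift from its \emph{absence}. The paper's dichotomy is far more operational: fix $0<t<1$ and ask whether the forward orbit of $(\omega,0)$ stays in $\{|y|\le t\}$ for every $\omega$ (Case~(1)) or escapes for some $\zeta$ (Case~(2)). Case~(1) immediately yields a section $\Sigma\to\Sigma\times J$ over the full shift, hence positive entropy. Case~(2) hands you a concrete drifting orbit to work with. No appeal to the non-existence of a Markov semi-conjugacy is needed.

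Second, and more substantially, in the drifting case the paper does \emph{not} look for a periodic fiber with definite drift. Instead it builds a \emph{nonsymmetric homoclinic} leaf $\{r_a\}$ whose scattering map $\psi=\phi_{11}$ (projection along strong stable/unstable laminations through the fixed leaf $\{p_a\}$) is not the identity but can be made $C^1$-close to it by taking the neighbourhood small enough (Lemma~2.7, using Lemma~2.1 and the minimality trick on the homoclinic word). Reversibility then supplies the companion leaf $\{Rr_a\}$ with scattering map exactly $\psi^{-1}$. This replaces your ``margin-of-safety'' input by a direct construction: closeness to the identity is achieved by a parameter choice, not deduced from an abstract hypothesis.

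Third, the counting. You propose a bounded-random-walk count on $\{+,-\}^{\mathbb Z}$, but your drift $\Delta(x_+,y)$ depends on $y$, and you have not said why the orbit can always be steered back into $J_+$ (or $J_-$) in bounded time. The paper handles this via Lemma~2.4: since $\psi$ is $C^1$-close to the identity, one finds intervals $a<b<c<d$ with $\psi^2([a,b])\subset(b,c)$, $\psi^{-2}([c,d])\subset(b,c)$, and both $\psi,\psi^{-1}$ mapping $[b,c]$ into $(a,d)$. Thus from $[b,c]$ one always has two choices, and from outside $[b,c]$ a forced double step returns to $[b,c]$; the number of admissible words grows exponentially. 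A second skew-product reduction (Proposition~2.3) over three symbols $\{0,1,2\}$ encoding $\{p_a\},\{r_a\},\{Rr_a\}$ makes the actual fiber maps $C^1$-close to $\mathrm{id},\psi,\psi^{-1}$, so the same interval inequalities persist.

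In short: your outline is viable in principle, but the step you correctly identify as the main obstacle is genuinely hard as stated, and the paper avoids it altogether by working with scattering maps of homoclinic leaves and a concrete smallness parameter rather than with periodic drifts extracted from a negative hypothesis.
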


As the return time is bounded, this result implies the positivity of the topological entropy for the flow, hence 
Theorem~\ref{t:main}. The remainder of this section is concerned with the proof of  Theorem~\ref{t:mainpi}, in several steps.

\subsection{Invariant laminations}

The geometry of the invariant set of $\Pi$ in $S$ is clarified by the existence of an invariant lamination $\mathcal{F}^c$:
the lamination consists of leaves that are diffeomorphic to intervals. There are leaves in $S_0$ close to  $\{p_a\}$
and in $S_1$ close to $\{q_a\}$.
These leaves are invariant under the application of an iterate of $\Pi$.
 We let $\mathcal{F}^c_x$ denote the leaf containing the point $x$. 
  The invariant lamination provides a reduction to a skew product of interval maps over a shift. Proposition~\ref{p:skew}
  contains the key result that makes this precise. Its proof is deferred to Section~\ref{s:clsp}. Before stating this, we first , introduce some notation.
  	\begin{figure}[ht]
  	\begin{center}
\input{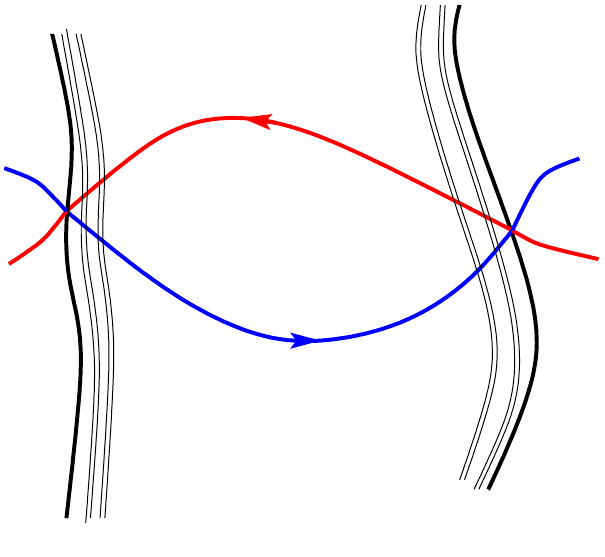_t}
  	\end{center}
  	\caption[]{Sketch of the lamination $\mathcal{F}^c$ near the leaves $\{p_a\}$ and $\{q_a\}$, that is invariant under an iterate $\Psi$ of $\Pi$.
  		\label{f:2hom}}
  \end{figure}

	Let $\Sigma = \{0,1\}^\mathbb{Z}$ be endowed with the product topology and
	$\sigma$ be the left shift operator on $\Sigma$: 
	$(\sigma \omega)_i = \omega_{i+1}$ with $\omega = (\omega_i)_{i\in\mathbb{Z}}$. 
	We write $[\eta_0\cdots\eta_j]$ for the cylinder $\{\omega\in\Sigma \; ; \; \omega_i = \eta_i, 0\le i \le j\}$.
	Let 
	$J \subset \mathbb{R}$  be the interval $(-1,1)$ and
	$\mathcal{C}^1(\Sigma\times J)$ denote the space of skew product maps
	$F : \Sigma \times J \to \Sigma \times \mathbb{R}$, with shift dynamics in the base $\Sigma$, so that 
	\[
	F(\omega,u) = (\sigma \omega , f_{\omega} (u)),
	\] 
	with $C^1$ fiber maps $f_{\omega}$. 
	We define $I \in \mathcal{C}^1(\Sigma\times J)$ as the skew product map with identity on the fiber:
	 \[
	 I(\omega,u) \coloneqq (\sigma\omega,u).
	 \]
	We furthermore define
	\[
	F^i (\omega,u) \coloneqq (\sigma^i \omega, f^i_\omega(u)),
	\]
	where
	$f_\omega^i(u) \coloneqq f_{\sigma^{i-1}\omega} \circ \cdots \circ  f_{\sigma \omega} \circ f_\omega (u)$.
	
	Let $d_{1} (f,g)$ denote the $C^1$ distance between interval maps $f,g$ on $J$,
	\[ d_{1} (f,g) \coloneqq  \sup_{u\in J}  \left\{ |f(u)- g(u)|, |f'(u)-g'(u)|\right\}. \]
	For two skew product maps $F,G \in \mathcal{C}^1(\Sigma\times J)$
	with $F(\omega,u) = (\sigma \omega , f_\omega(u))$ and $G(\omega,u) = (\sigma\omega, g_\omega(u))$,
	we define the norm
	\[
	|F - G|_1 \coloneqq \sup_{\omega\in\Sigma} d_1 (f_\omega,g_\omega).
	\]
	We consider $\mathcal{C}^1(\Sigma\times J)$ equipped with this norm.

\begin{proposition}\label{p:skew}
		There is a sequence of decreasing neighbourhoods $\mathcal{V}_k$ of $\{ p_a\}\cup \{q_a\}$
		with the following properties:
\begin{enumerate}[(i)]		
\item		There exists a center lamination $\mathcal{F}^c$ inside $\mathcal{V}_k$ 
		with one dimensional leaves, containing the
		curves of fixed points $\mathcal{F}^c_{p_0} = \{p_a\}$ and of
		homoclinic points $\mathcal{F}^c_{q_0} = \{q_a\}$.
		The leaves $\mathcal{F}^c_v$ are continuously differentiable
		and the bundle of tangent lines $T_v\mathcal{F}^c_v$ depends continuously on $v \in \mathcal{V}_k$.	
	
\item	There is $K(k)$ with $\lim_{k\to\infty} K(k)=\infty$ and a coding $W^c: \Sigma \to \mathcal{F}^c$
such that  for   $\Psi \coloneqq \Pi^{K(k)}$   we have
	\[
	\Psi (W^c(\omega)) \cap \mathcal{V}_k \subset W^c(\sigma\omega).
	\]
	Moreover,
	$\Psi$ restricted to
	$\mathcal{F}^c$ is topologically conjugate to a skew product map
	$F_k: \Sigma \times J \to \Sigma \times \mathbb{R}$, 
	\[
	F_k (\omega , u) = (\sigma \omega , f_\omega (u)),
	\]
	where the fiber maps $f_\omega$ are continuously differentiable and depend, along with their derivatives, continuously on $\omega$.
	As $k\to \infty$, $F_k$ converges to $I$ in $\mathcal{C}^1 (\Sigma\times J)$. 
	\end{enumerate}
\end{proposition}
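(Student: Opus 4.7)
The plan is to construct the center lamination via the normal hyperbolicity of the invariant cylinder $\mathcal{M} = \bigcup_a \gamma_a$, and then build the symbolic coding and skew-product structure by applying a Conley--Moser / $\lambda$-lemma type horseshoe construction along the homoclinic tangle. For part (i), since $\mathcal{M}$ is normally hyperbolic with saddle-type normal directions, the Hirsch--Pugh--Shub theorem provides local center-stable and center-unstable manifolds $W^{cs}_{\text{loc}}(\mathcal{M})$, $W^{cu}_{\text{loc}}(\mathcal{M})$, each smoothly foliated by $C^1$ strong stable resp.\ unstable leaves with continuously varying tangent bundles. Near $\{p_a\}$, intersecting with $S_0$ gives $n$-dimensional local stable and unstable manifolds of $\{p_a\}$; the center direction in $S_0$ is the $1$-dimensional complement of the strong stable and unstable directions, and its integral curves form a $1$-dimensional $C^1$ lamination whose reference leaf is $\{p_a\}$. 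Near $\{q_a\}$ in $S_1$, the strong transversality assumption is equivalent to $W^{cs}(\mathcal{M})$ and $W^{cu}(\mathcal{M})$ intersecting transversely along the $2$-dimensional surface $\bigcup_a \eta_a$, which meets $S_1$ transversely in the curve $\{q_a\}$; sliding $\{q_a\}$ along the strong stable and unstable foliations yields a corresponding $1$-dimensional $C^1$ lamination around $\{q_a\}$.

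For part (ii), I would choose shrinking neighborhoods $\mathcal{V}_k = U_p^{(k)} \cup U_q^{(k)}$ of $\{p_a\} \cup \{q_a\}$. Normal hyperbolicity of $\mathcal{M}$ together with the strongly transverse homoclinic $\eta_0$ gives, by a standard horseshoe construction (applied leafwise on $\mathcal{F}^c$ since the center direction is invariant), a symbolic coding of $\Psi$-orbits staying in $\mathcal{V}_k$ for a suitably chosen iterate $\Psi = \Pi^{K(k)}$: full orbits correspond bijectively to sequences $\omega \in \Sigma = \{0,1\}^{\mathbb{Z}}$, where each symbol records whether the corresponding visit lies in $U_p^{(k)}$ or $U_q^{(k)}$. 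The intersection of the associated stable and unstable tubes picks out a unique center leaf $\mathcal{F}^c_{W^c(\omega)}$ per code, giving the coding $W^c$ with $\Psi(W^c(\omega)) \cap \mathcal{V}_k \subset W^c(\sigma\omega)$. Parameterizing each leaf by its $a$-coordinate and rescaling to $J=(-1,1)$ yields the topological conjugacy to a skew product $F_k(\omega,u) = (\sigma\omega, f_\omega(u))$ with $C^1$ fiber maps depending continuously on $\omega$.

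The main obstacle is verifying $F_k \to I$ in $\mathcal{C}^1(\Sigma\times J)$. For $C^0$-convergence, note that $\Pi$ fixes each $p_a$ and the homoclinic excursion along $\eta_a$ is consistent with the $a$-parameterization, so the $f_\omega$ converge uniformly to the identity as $\mathcal{V}_k$ shrinks. For the $C^1$ part, $f_\omega$ is a composition of $K(k)$ factors arising from iterates of $\Pi$ along center leaves; each factor has center derivative equal to $1$ at $\{p_a\}\cup\{q_a\}$, so the deviation of its derivative from $1$ is bounded by the $C^1$ modulus of continuity of $\Pi$ in the normal direction times the distance of the orbit point from the reference curves. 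Normal hyperbolicity forces orbits in $\mathcal{V}_k$ to track $\{p_a\}\cup\{q_a\}$ exponentially closely, making these deviations summable uniformly in $\omega$ and hence the product of derivatives close to $1$. Keeping this estimate uniform over all $\omega \in \Sigma$ is the delicate step.
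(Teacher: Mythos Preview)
Your overall strategy---normal hyperbolicity for the lamination, then a horseshoe construction for the coding---is sound and parallels the paper's approach in spirit. The paper's technical route is rather different, however: it builds each leaf $W^c(\omega)$ directly, for fixed $\omega\in\Sigma$, via a contraction on weighted sequence spaces in \emph{cross coordinates} (Shilnikov--Vanderbauwhede style). Starting from a local normal form near $\{p_a\}$ in which the center drift per iterate is $z_{j+1}-z_j = g(x_j,y_j,z_j)$ with $g=\mathcal{O}(x_j y_j)$, the long local passage $\Pi_0^k$ is recast as a map $(x_0,y_k,z_0)\mapsto(x_k,y_0,z_k)$ satisfying $z_k=z_0+R_z$ with $|R_z|,|DR_z|\le C\lambda^{k/2}$. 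This packages the entire length-$k$ passage into one estimate, from which both the existence of the $C^1$ leaves and $F_k\to I$ in $\mathcal{C}^1$ follow almost immediately.

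Your proposed $C^1$ argument, by contrast, has a genuine gap. You bound the per-step deviation of the center derivative from $1$ by (modulus of continuity of $D\Pi$) $\times$ (distance of the orbit point to $\{p_a\}\cup\{q_a\}$). But along a passage of length $K\sim k$ near $\{p_a\}$, the $j$-th iterate is at distance roughly $d_j\approx \lambda^j+\lambda^{k-j}$ from $\{p_a\}$: the orbit enters at $x$-distance $O(1)$, exits at $y$-distance $O(1)$, and is only close to $\{p_a\}$ near the middle. Hence $\sum_{j=0}^{k} d_j \approx 2/(1-\lambda)=O(1)$, \emph{not} $o(1)$, so your product of derivative factors is merely bounded, not close to $1$. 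The fix is exactly the structure the paper exploits: because $W^s_{\textrm{loc}}(p_a)$ and $W^u_{\textrm{loc}}(p_a)$ are invariant \emph{for every} $a$, the center drift $g$ and its $z$-derivative vanish on $\{x=0\}\cup\{y=0\}$ and are therefore $\mathcal{O}(x_j y_j)$, not merely $\mathcal{O}(|x_j|+|y_j|)$. The per-step contribution is then $\mathcal{O}(\lambda^j\cdot\lambda^{k-j})=\mathcal{O}(\lambda^k)$ and the total is $\mathcal{O}(k\lambda^k)\to 0$. You must either invoke this normal form explicitly, or---more robustly, since you also need to control how the leaf graph $(x_0,y_0)=(x_0(z_0),y_0(z_0))$ varies with $z_0$---adopt the cross-coordinate formulation, which handles all of these contributions at once.
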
	

In Proposition~\ref{p:skew}, the leaves of $\mathcal{F}^c$ are coded by symbol sequences in $\Sigma$.
The symbols $0$ and $1$ stand for $S_0$ and $S_1$, so that the leaf 
$W^c (\omega)$ lies in $S_{\omega_0}$
and $W^c (\sigma^k \omega) \subset S_{\omega_k}$. 
Reversibility of the system is reflected in the dynamics of the center leaves $W^c(\omega)$.
We define an involution $\mathcal{R}$ on $\Sigma$ by
\[
 \left( \mathcal{R}\omega \right)_k \coloneqq\omega_{-k}.
\] 
Then, by reversibility,
\[
R W^c (\omega) = W^c (\mathcal{R} \omega).
\]
A symbolic sequence $\omega$ is called symmetric if there exists 
$s\in\mathbb Z$ such that
\[
\mathcal{R}\omega=\sigma^s\omega.
\]

For  a periodic symbolic sequence $\omega \in \Sigma$, the leaf $W^c(\omega)$ is taken to itself by an iteration of $\Psi$, so we call it a periodic leaf.
We speak of symmetric and nonsymmetric periodic leaves.
A symbolic sequence $\omega \in \Sigma$ is called homoclinic if $\omega_i  = 0$ for $|i| \ge N$ for some $N$.
For  a homoclinic symbolic sequence $\omega \in \Sigma$, the leaf $W^c(\omega)$ is called a homoclinic leaf.
We distinguish symmetric and nonsymmetric homoclinic leaves.
We call the homoclinic leaf $W^c(\omega)$ $N$-homoclinic if 
$| \{ i \in \mathbb{Z}\; ; \; \omega_i = 1\}| = N$.

For the dynamics generated by the skew product map $F_k$ from Proposition~\ref{p:skew}, 
iterates of points may drift along fibers.
  The central issue is to identify those points whose iterates do not drift away from $\mathcal{V}_k$.
The interval diffeomorphisms $f_\omega$  that make up $F_k$ are close to the identity map, meaning there is at most a small drift in the fibers. 
Thus for each $N \in \mathbb{N}$ there is a neighbourhood $\mathcal{V}_k$
so that for each finite itinerary $\eta_0\ldots\eta_N$ and each $\omega\in [\eta_0\ldots\eta_N]$, there is an interval of points $u \in J$ for which $f^i_\omega$ is defined for $0\le i \le N$.
However statement only concerns finitely many iterates and does not help to determine the maximal invariant set 
\[
\Lambda = \bigcap_{i\in\mathbb{Z}} F_k^i(\Sigma\times J).
\] 
In particular, the intersection of $\Lambda$ with fibers $\{\omega\}\times J$
may be empty. Note that this may be the case even for points in symmetric periodic leaves of high period.

The following lemma identifies cases where large numbers of iterates stay close.

\begin{lemma}\label{l:homnodrift}
	For each $\varepsilon >0$ there is $k \in \mathbb{N}$ so that the following holds for $F_k$.
	Suppose $\omega,\chi  \in \Sigma$ and $j>0$ are so that $\omega_i=\chi_i$ for $0\le i \le j$.	
	Then 
	\[
	|f^j_{\chi} (u) - f^j_{\omega} (u) | < \varepsilon.
	\]
\end{lemma}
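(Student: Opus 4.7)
The plan is to exploit two consequences of Proposition~\ref{p:skew}: the convergence $F_k \to I$ in $\mathcal{C}^1(\Sigma\times J)$ as $k\to\infty$, and the continuous dependence of each fiber map $f_\omega$ on $\omega\in\Sigma$. Since $\Sigma$ is compact, the latter yields a uniform modulus: for any $\rho>0$ there exists $N$ such that if $\omega,\eta$ agree on positions $[-N,N]$ then $d_1(f_\omega,f_\eta)<\rho$. The former allows choosing $k$ so that $d_1(f_\omega,\mathrm{id})<\delta$ uniformly in $\omega$, for any prescribed $\delta>0$.

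Given $\varepsilon>0$, I would first pick $\delta$ small (depending on $\varepsilon$ and the modulus of continuity) and then take $k$ correspondingly large. Writing $y_i := f^i_\omega(x)$ and $z_i := f^i_\eta(x)$, a standard telescoping using $|f'_{\sigma^i\omega}-1|<\delta$ yields
\[
 |y_{i+1}-z_{i+1}| \le (1+\delta)\,|y_i-z_i| + d_0\bigl(f_{\sigma^i\omega},f_{\sigma^i\eta}\bigr),
\]
and hence, since $y_0=z_0=x$,
\[
 |y_j-z_j| \le \sum_{i=0}^{j-1} (1+\delta)^{j-1-i}\, d_0\bigl(f_{\sigma^i\omega},f_{\sigma^i\eta}\bigr).
\]

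The key structural observation is that the hypothesis $\omega_m=\eta_m$ for $0\le m\le j$ means the shifted sequences $\sigma^i\omega$ and $\sigma^i\eta$ agree on the window $[-i,\,j-i]$, which is symmetric around the origin when $i\approx j/2$. So $d_0(f_{\sigma^i\omega},f_{\sigma^i\eta})$ decays with $\min(i,j-i)$ via the continuity modulus, while in every case it is trivially bounded by $2\delta$. Splitting the summation into a boundary block (small $i$ or $j-i$, a bounded number of terms each contributing at most $O(\delta)$) and a middle block (where the continuity modulus forces each term to be small) should yield a uniform estimate of the form $C(\delta+\rho)$, which falls below $\varepsilon$ by appropriate choice of $\delta$ and $\rho$.

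The main obstacle is to control the Gronwall exponential factor $(1+\delta)^{j-1-i}$, since $j$ is a priori unrestricted. I expect the resolution to combine two ingredients: whenever $f^j_\omega(x)$ is defined, the trajectories must remain in the bounded interval $J$, effectively constraining the cumulative drift; and the freedom, as $k\to\infty$, to shrink $\delta$ below any threshold set by the continuity modulus. Balancing $\delta$ against $\rho$ and the effective range of $j$ so that boundary and middle contributions both stay below $\varepsilon$ uniformly is the delicate technical step of the argument.
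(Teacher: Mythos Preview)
Your approach is genuinely different from the paper's, and the obstacle you flag is real and, I believe, fatal to the analytic route you outline. The telescoping estimate
\[
|f^j_\omega(x)-f^j_\eta(x)|\le\sum_{i=0}^{j-1}(1+\delta)^{\,j-1-i}\,d_0(f_{\sigma^i\omega},f_{\sigma^i\eta})
\]
carries the factor $(1+\delta)^{j}$, and since $j$ is unrestricted this cannot be absorbed. The fact that the orbits stay in $J$ only gives the trivial bound $|y_j-z_j|<2$, not $|y_j-z_j|<\varepsilon$ for arbitrary $\varepsilon$. And shrinking $\delta$ (by enlarging $k$) does not help once $k$ is fixed: the lemma must hold for \emph{all} $j$ simultaneously for a single $k$. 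Even if one inputs an exponential modulus of continuity $d_0(f_{\sigma^i\omega},f_{\sigma^i\eta})\le C\mu^{\min(i,j-i)}$ coming from the construction of the center leaves, the terms with small $i$ still contribute roughly $(1+\delta)^{j}$ times a constant, which blows up. The error-propagation viewpoint is simply the wrong mechanism here.

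The paper's proof avoids the accumulation of errors entirely by a geometric argument using the normal hyperbolicity of the lamination. From Proposition~\ref{p:strongstable} each point $v$ of the center lamination carries local strong stable and unstable leaves $\mathcal{F}^s_v,\mathcal{F}^u_v$, and for $k$ large these are $C^1$-close to affine (constant $z$). Because $\omega_i=\eta_i$ for $0\le i\le j$, the center leaves $W^c(\sigma^i\omega)$ and $W^c(\sigma^i\eta)$ lie in the same cross-section at each such $i$, so the local product structure defines a holonomy $h_i:W^c(\sigma^i\omega)\to W^c(\sigma^i\eta)$ (the unique $q$ with $\mathcal{F}^s_p\cap\mathcal{F}^u_q\neq\emptyset$). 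The invariance of the strong laminations under $\Psi$ makes these holonomies \emph{intertwine} the fiber maps: $f_{\sigma^i\eta}\circ h_i=h_{i+1}\circ f_{\sigma^i\omega}$, hence $f^j_\eta\circ h_0=h_j\circ f^j_\omega$. Since each $h_i$ is $\varepsilon$-close to the identity in the $z$-coordinate (the laminations being near-affine), this conjugacy relation yields the estimate directly, with no iteration of errors and no Gronwall factor. That is what your approach is missing: a dynamically invariant correspondence between the two fibers, rather than a step-by-step comparison.
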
	

\begin{proof}
	Points $v$ in the center lamination have local strong stable manifolds $\mathcal{F}^s_v$ and local strong unstable manifolds $\mathcal{F}^u_v$, see Proposition~\ref{p:strongstable}. The strong stable and strong unstable laminations are
	arbitrarily close to affine laminations if $k$ is large enough. By the assumptions,  $W^c (\sigma^i \omega)$ and $W^c(\sigma^i \chi)$ are in the same cross section $S_{\omega_i}$ for $0\le i \le j$.  Each $\mathcal{F}^s_p$, $p \in W^c(\sigma^i \omega)$, intersects a leaf 
	$\mathcal{F}^u_q$ for  a unique $q \in W^c(\sigma^i \chi)$, $0\le i \le j$. 
    As the local strong stable and local strong unstable manifolds are invariant, this connects the orbits of such points $p$ and $q$.  The result follows.

 Each $\mathcal{F}^s_p$, $p \in W^c(\omega)$, intersects a leaf 
	$\mathcal{F}^u_q$ for  a unique $q \in W^c( \chi)$.
    As the local strong stable and local strong unstable manifolds are invariant, 
this connects the orbits of such points $p$ and $q$. That is, 
$\mathcal{F}^s_{\Psi^i (p)}$ intersects 
	$\mathcal{F}^u_{\Psi^i (q)}$.
  The result follows since the strong stable and strong unstable laminations are
	close to affine for large $k$. 
\end{proof}	

\subsection{Nonsymmetric homoclinic orbits}

A key role in the argument is played by nonsymmetric homoclinic leaves in $\mathcal{F}^c$.
	Let $W^c(\omega) = \{r_a\}$ be a nonsymmetric homoclinic leaf.
We define maps $\phi_{ij}$, $0\le i,j \le 1$, by
\[
\begin{array}{ll}
\phi_{00}  : \{p_a\} \to \{p_a\}; &\; \phi_{00} (p_a) = p_a,
\\
\phi_{01}  : \{p_a\} \to \{r_a\}; &\; \phi_{01} (p_a) = W^u (p_a) \cap \{ r_a \},
\\
\phi_{10}  : \{r_a\} \to \{p_a\}; &\; \phi_{10} (r_a) = W^u (r_a) \cap \{ p_a \},
\\
\phi_{11}  : \{r_a\} \to \{r_a\}; &\; \phi_{11} (r_a) = \phi_{01}\circ\phi_{10} (r_a),
\end{array}
\]
see Figure~\ref{f:3hom}.
\begin{figure}[ht]
	\begin{center}
\input{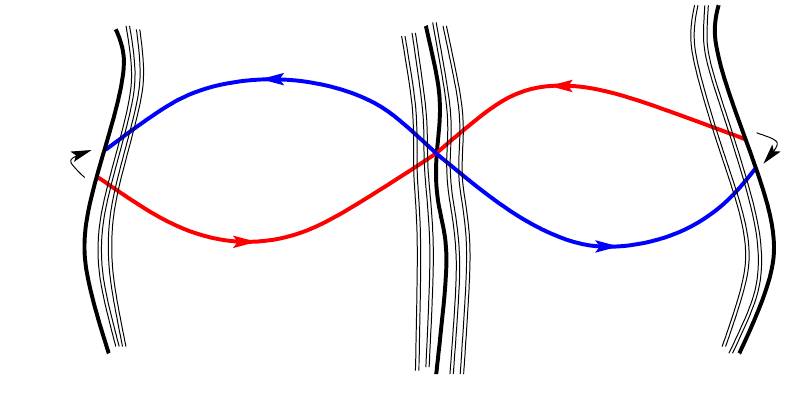_t}
	\end{center}
	\caption[]{Sketch of nonsymmetric homoclinic leaves $\{r_a\}$ and $\{Rr_a\}$ and relevant maps between them. 
		\label{f:3hom}}
\end{figure}
Note that 
\[
\phi_{11} (r_a) = W^u ( W^u(r_a) \cap \{p_a\}) \cap \{ r_a\}.
\]

The definition of the maps $\phi_{ij}$, $0\le i ,j \le 1$ for the homoclinic leaf $\{r_a\}$ 
can be extended to the symmetric image $\{Rr_a\}$. 
We introduce a symbol $2$ and have  
maps $\phi_{ij}$, $i,j \in \{0, 2\}$ obtained from $\phi_{ij}$, $0\le i ,j \le 1$
by composing with $R$:
\[
\begin{array}{ll}
\phi_{02}: \{p_a\} \to \{R r_a\}; &\;  \phi_{02} =   R \phi_{01},
\\
\phi_{20}: \{R r_a\} \to \{p_a\}; &\;  \phi_{20} = \phi_{10} \circ R,
\\
\phi_{22}: \{R r_a\} \to \{Rr_a\}; &\;   \phi_{22}  =  R \phi_{11} \circ R.
\end{array}
\]
We  finally let $\phi_{12} = \phi_{02}\circ \phi_{10}$ and 
$\phi_{21}  =  \phi_{01}\circ \phi_{20}$.
By reversibility we have $\phi_{02} = R \phi_{10}^{-1}\circ R$ and $\phi_{20} = R\phi_{01}^{-1}\circ R$.
Upon identifying  $u \in \{r_a\}$ with $Ru \in \{Rr_a\}$ we have
$\phi_{11} = \phi_{22}^{-1}$.
Further,
$\phi_{01} = \phi_{20}^{-1}$, $ \phi_{10}=\phi_{02}^{-1}$ and
$\textrm{id}\, = \phi_{00} = \phi_{12} = \phi_{21}$.

One may consider dynamics restricted to small neighbourhoods of $\{p_a\}$ and the nonsymmetric homoclinic leaves
$\{r_a\}$ and $\{Rr_a\}$ and obtain results analogous to Proposition~\ref{p:skew} for dynamics near $\{p_a\}$ and 
the symmetric homoclinic leaf $\{q_a\}$.

Let $\Omega = \{0,1,2\}^\mathbb{Z}$ be endowed with the product topology and define the skew product system
$L: \Omega \times J \to \Omega \times \mathbb{R}$ by
\begin{align}\label{e:L}
L (\nu , u) &\coloneqq (\sigma \nu , \phi_\nu (u)),
\end{align}
where we write $\phi_\nu$ for $\phi_{\nu_0\nu_1}$.
Recall that $\mathcal{C}^1(\Omega\times J)$ denotes the space of skew product systems
on $\Omega\times J$ over the shift as base dynamics, with $C^1$ fiber maps and  
endowed with the norm
\[
|F - G|_1 \coloneqq  \sup_{\nu\in\Omega} d_1 (f_\nu,g_\nu).
\]	

\begin{proposition} \label{p:Wn}
	There are decreasing neighbourhoods $\mathcal{W}_\ell$ of $\{p_a\}\cup \{r_a\} \cup \{R r_a\}$
	with the following properties:
\begin{enumerate}[(i)]	
	\item There exists a lamination $\mathcal{G}^c \subset \mathcal{F}^c$ inside $\mathcal{W}_\ell$ and
	an iterate $\Phi = \Pi^{L(\ell)}$ so that  $\Phi$ restricted to
	$\mathcal{G}^c$ is topologically conjugate to a skew product map
	$G_\ell: \Omega \times J \to \Omega \times \mathbb{R}$,
	\[
	G_\ell (\nu , u) = (\sigma \nu , g_\nu (u)).
	\]
	\item The maps $g_\nu$ are continuously differentiable and, along with their derivatives, depend continuously on $\nu$.
	As $\ell\to \infty$, $G_\ell$ converges to $L$ in $\mathcal{C}^1(\Omega\times J)$. 
	\end{enumerate}
\end{proposition}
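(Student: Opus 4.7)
The plan is to mirror the proof of Proposition~\ref{p:skew}, replacing the two-leaf configuration $\{p_a\}\cup\{q_a\}$ and alphabet $\{0,1\}$ by the three-leaf configuration $\{p_a\}\cup\{r_a\}\cup\{R r_a\}$ and alphabet $\{0,1,2\}$. The key observation enabling this is that $\{r_a\}$ and $\{R r_a\}$ are themselves homoclinic leaves of the lamination $\mathcal{F}^c$ already produced by Proposition~\ref{p:skew}, so via the coding $W^c$ they correspond to definite homoclinic sequences $\omega^{(r)},\omega^{(Rr)}\in\Sigma$ with $\omega^{(Rr)}=\mathcal{R}\omega^{(r)}$. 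In particular, there exists $N$ so large that $\omega^{(r)}_i=0$ for $|i|\geq N$, so each excursion through $\{r_a\}$ (resp. $\{R r_a\}$) in the base $\Sigma$ occupies a uniformly bounded block of symbols.

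First I would choose $\mathcal{W}_\ell$ as a shrinking nested family of open neighbourhoods of $\{p_a\}\cup\{r_a\}\cup\{R r_a\}$, and pick $L(\ell)$ as a sufficiently large multiple of the earlier $K(k)$ so that a single iterate of $\Phi=\Pi^{L(\ell)}$ corresponds to applying $\Psi$ for a window of length $M_\ell=L(\ell)/K(k)\gg N$. I would then define $\mathcal{G}^c\subset \mathcal{F}^c$ to be the sub-lamination of those leaves whose $\Sigma$-code decomposes as a concatenation of length-$M_\ell$ blocks, each being either the all-zero block (symbol $0$), a zero-padded centered copy of $\omega^{(r)}_{-N}\cdots\omega^{(r)}_N$ (symbol $1$), or its $\mathcal{R}$-reflection (symbol $2$). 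This block substitution yields a $\Phi$-equivariant coding $W^{\mathcal{G}}:\Omega\to\mathcal{G}^c$ that intertwines the shift on $\Omega$ with the action of $\Phi$ on $\mathcal{G}^c$, establishing part (i).

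For part (ii), the fiber maps $g_\nu$ of $G_\ell$ are compositions of $M_\ell$ fiber maps of $F_k$. By Proposition~\ref{p:skew}, each of these factors converges to the identity in $C^1$ as $k\to\infty$, so all but finitely many of them contribute trivially in the limit $\ell\to\infty$. The residual contribution from the bounded window around the $\omega^{(r)}$- or $\omega^{(Rr)}$-block can be identified, using the strong-stable and strong-unstable laminations invoked in the proof of Lemma~\ref{l:homnodrift}, with the geometric stable/unstable intersection maps between the three base leaves. By the very definitions of $\phi_{00},\ldots,\phi_{22}$ preceding the proposition, this limit is precisely $\phi_{\nu_0\nu_1}$, and the $C^1$ estimates in Proposition~\ref{p:skew} upgrade pointwise convergence to $C^1$ convergence $G_\ell\to L$.

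The main obstacle I expect is the last step, namely pinning down the limit of compositions of many near-identity fiber maps and matching it with the specified geometric maps $\phi_{ij}$. This requires both careful bookkeeping of how the strong-stable and strong-unstable leaves interlock between adjacent blocks, and uniform $C^1$ control on the non-trivial transition window, both of which should follow from Proposition~\ref{p:strongstable}, Lemma~\ref{l:homnodrift}, and the $C^1$ convergence $F_k\to I$ given by Proposition~\ref{p:skew}.
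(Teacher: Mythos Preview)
The paper does not actually give a proof of Proposition~\ref{p:Wn}; it states only that ``the proof of Proposition~\ref{p:Wn} is left to the reader as it only concerns a slight variation of the arguments in the proof of Proposition~\ref{p:skew}.'' Your proposal is a legitimate way to flesh out such a variation, and the outline is sound.

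There is, however, a difference in route worth noting. The argument the paper points to (the proof of Propositions~\ref{p:Vn} and~\ref{p:skew} in Section~\ref{s:clsp}) proceeds by constructing the lamination \emph{from scratch} via cross-coordinates and the contraction $\mathcal{H}$ on scaled sequence spaces, now with three transition maps $\Phi^{\pm}_{ij}$, $i,j\in\{0,1,2\}$, associated to neighbourhoods of $\{p_a\}$, $\{r_a\}$, $\{Rr_a\}$. In that approach the convergence $G_\ell\to L$ drops out of the cross-coordinate estimates of Lemma~\ref{l:n} directly: the $z$-component of each $\Phi^{\pm}_{ij}$ is of the form $z\mapsto z + \mathcal{O}(\lambda^{\ell/2})$ \emph{in the coordinates adapted to the local stable and unstable manifolds}, and unwinding those coordinate choices on $\{r_a\}$ and $\{Rr_a\}$ is exactly what produces the non-identity limits $\phi_{ij}$. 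Your approach instead builds $\mathcal{G}^c$ as a sub-lamination of the already existing $\mathcal{F}^c$ by block substitution in $\Sigma$ and then reads off $g_\nu$ as a long composition of $F_k$-fiber maps. This is more economical in that it reuses Proposition~\ref{p:skew} wholesale, but, as you correctly anticipate, it shifts the work to the last step: identifying the limit of those compositions with the geometric holonomies $\phi_{ij}$. That identification is exactly what the cross-coordinate computation delivers for free, so if you find the bookkeeping in your final step heavy, redoing the contraction argument of Proposition~\ref{p:Vn} with three symbols is the shorter path and is what the paper has in mind.
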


The symbolic code is derived from $\nu_i = 0,1,2$ if  $\Phi^i (x)$ is near $\{p_a\}$, $\{r_a\}$ or $\{R r_a\}$ respectively.
The proof of Proposition~\ref{p:Wn} is left to the reader as it only concerns a slight variation of the arguments in the proof of Proposition~\ref{p:skew}, that is presented in Section~\ref{s:clsp}.

\subsection{Scattering maps}

The following definition is inspired by \cite{delllasea00,delllasea08}. 
Assume the above setup with $W^c (\omega) = \{r_a\}$ a nonsymmetric homoclinic leaf and
$\phi_{11} (r_a) = W^u ( W^u(r_a) \cap \{p_a\}) \cap \{ r_a\}$, cf.~also Figure~\ref{f:3hom}.
\begin{definition}
	The map $\psi_\omega = \phi_{11} : \{r_a\} \to \{r_a\}$ is the scattering map for $\{r_a\}$.
\end{definition}

Take $k$ large and consider $\Psi = \Pi^{K(k)}$ on $\Sigma \times J$ as in Proposition~\ref{p:skew}.
Let $0<t<1$ and consider the following assumption:\\
\begin{description}
\item[{\it Assumption}] There exists $\zeta \in \Sigma$ with $|f^h_\zeta (0) | > t$ for some $h \in \mathbb{N}$.\\
\end{description}	

Under this assumption we find a nonsymmetric homoclinic leaf with nonidentity scattering map (Lemma~\ref{l:27})
and use Proposition~\ref{p:Wn} to study nearby dynamics.

 Take $\varepsilon$ much smaller than $t$.  
 	Given $\varepsilon$ we can take $k$ large enough as in Lemma~\ref{l:homnodrift}.
 
	\begin{lemma}\label{l:27}
		Given $\delta>0$, when choosing $k$ large enough 
		there is a homoclinic leaf $\{r_a\}$ 
		whose scattering map  $\psi$ satisfies $\psi \ne \textrm{id}$ and 
		$d_1(\psi , \textrm{id}) < \delta$.
	\end{lemma}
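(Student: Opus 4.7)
The plan is to lift the drift guaranteed by the Assumption to a nonsymmetric homoclinic leaf, and then identify its scattering map with this drift modulo $C^1$-small corrections.

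Given $\delta > 0$, invoke the Assumption to fix $\zeta \in \Sigma$ and $h \in \mathbb{N}$ with $|f_\zeta^h(0)| > t$, and set $\varepsilon := \min\{t/2,\; \delta/(3h)\}$. By Proposition~\ref{p:skew}(ii), $F_k \to I$ in $\mathcal{C}^1(\Sigma \times J)$, so for $k$ large enough we have $d_1(f_\omega, \mathrm{id}) < \varepsilon/h$ uniformly in $\omega \in \Sigma$, and Lemma~\ref{l:homnodrift} applies with tolerance $\varepsilon$. Define $\omega \in \Sigma$ by truncation: $\omega_i := \zeta_i$ for $0 \le i < h$ and $\omega_i := 0$ otherwise. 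Then $\omega$ is homoclinic, and Lemma~\ref{l:homnodrift} yields $|f_\omega^h(0)| > t - \varepsilon > t/2$.

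Next, I claim the leaf $\{r_a\} := W^c(\omega)$ is automatically nonsymmetric. Indeed, the support $\{i : \omega_i = 1\}$ is contained in $\{0,\dots,h-1\}$, so matching supports forces the only candidate shift for a symmetry $\mathcal{R}\omega = \sigma^s \omega$ to be $s = h-1$, i.e., $\omega_j = \omega_{h-1-j}$ (palindromic). In that case the reversibility relation $f_{\mathcal{R}\omega'} = f_{\omega'}^{-1}$ (translating $g_x = g_{\hat R(x)}^{-1}$ from the Introduction) gives $f_{\sigma^{h-1-j}\omega} = f_{\sigma^j\omega}^{-1}$ for all $j$, and the composition $f_\omega^h = f_{\sigma^{h-1}\omega} \circ \cdots \circ f_\omega$ then telescopes from the outside inward to $\mathrm{id}$ (any odd middle factor is an involution $C^1$-close to $\mathrm{id}$, hence equal to it). This contradicts $f_\omega^h(0) \neq 0$, so $\omega$ is nonsymmetric.

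The scattering map $\psi = \phi_{11}$ is defined via holonomies $\phi_{01}, \phi_{10}$ along the strong unstable lamination of $\mathcal{F}^c$. By Proposition~\ref{p:strongstable} (used as in the proof of Lemma~\ref{l:homnodrift}), this lamination is $C^1$-arbitrarily close to an affine one for $k$ large. In the skew-product fiber coordinate, $\phi_{01} \circ \phi_{10}$ then reduces, modulo $C^1$-errors $o_k(1)$, to the algebraic composition $f_\omega^h$ of fiber maps across the support of $\omega$, so $d_1(\psi, f_\omega^h) = o_k(1)$. Consequently
\[
 d_1(\psi, \mathrm{id}) \le d_1(\psi, f_\omega^h) + d_1(f_\omega^h, \mathrm{id}) \le o_k(1) + h \cdot (\varepsilon/h) < \delta,
\]
while $|\psi(0)| \ge |f_\omega^h(0)| - o_k(1) > t/3 > 0$ for $k$ large, so $\psi \neq \mathrm{id}$.

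The principal obstacle is this last identification: reducing the geometrically defined scattering map (strong-unstable holonomy) to the algebraic composition $f_\omega^h$ with $C^1$-error controllable by $k$. The argument relies on the fact that both the individual fiber maps and the strong stable/unstable laminations become increasingly trivial as $k \to \infty$, in parallel with the use of Proposition~\ref{p:strongstable} in the proof of Lemma~\ref{l:homnodrift}.
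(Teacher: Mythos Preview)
There is a genuine gap: your two conclusions are incompatible. You obtain $|\psi(0)| > t/3$ and simultaneously $d_1(\psi,\mathrm{id}) < \delta$. But by definition $d_1(\psi,\mathrm{id}) \ge |\psi(0)-0| = |\psi(0)| > t/3$, so the argument only works when $\delta > t/3$. Since $t$ is the fixed threshold from the Assumption while $\delta>0$ is arbitrary, the proof fails for all small $\delta$. The underlying mistake is the estimate $d_1(f_\omega^h,\mathrm{id}) \le h\cdot(\varepsilon/h)$: to realise drift larger than $t$ in $h$ steps one necessarily has $h\cdot \sup_\omega d_1(f_\omega,\mathrm{id}) \gtrsim t$, so as $k\to\infty$ and the individual fiber maps approach the identity, $h=h(k)$ must blow up and the product $h\cdot d_1(f_\omega,\mathrm{id})$ stays bounded below by roughly $t$, not by $\delta$. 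In short, the very drift that makes $\psi\ne\mathrm{id}$ also forces $d_1(\psi,\mathrm{id})$ to be of order $t$.

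The paper circumvents this by a minimality argument that you are missing. Once \emph{some} homoclinic word with nontrivial scattering map has been found, one passes to a homoclinic word $\omega$ with $\omega_0=\omega_m=1$, $\omega_i=0$ for $i<0$ and $i>m$, and $m$ \emph{minimal} among all such words with $\psi_\omega\ne\mathrm{id}$. Deleting the last $1$ (setting $\eta_m=0$) gives a strictly shorter homoclinic word $\eta$, whose scattering map is the identity by minimality. The scattering maps of $\omega$ and $\eta$ differ only through the contribution of a single symbol, and that contribution is $o_k(1)$ in $C^1$ by Proposition~\ref{p:skew} together with Lemma~\ref{l:n}. Hence $\psi_\omega$ is nontrivial yet $d_1(\psi_\omega,\mathrm{id})=o_k(1)<\delta$ for $k$ large. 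The minimality step is precisely what decouples the size of the scattering map from the fixed threshold $t$.
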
	

\begin{proof}
	Let $\omega \in \Sigma$ be given by $\omega_i = \zeta_i$ for $0\le i \le h$ and $\omega_i = 0$ otherwise. 
	So $\omega$ is a homoclinic symbolic sequence.
	By 	Lemma~\ref{l:homnodrift} $f^h_\omega (0)$ is close to  $f^h_\zeta (0)$.
	 By the above assumption, it follows that the scattering map 
	$\psi_\omega$ is not the identity map. Necessarily $\omega$ is a nonsymmetric symbolic sequence.
	
	Knowing that there are homoclinic symbolic sequences with nonidentity scattering map, we can take $m$ minimal
	so that $\omega$ is a  homoclinic itinerary with nonidentity scattering map, $\omega_0=1$, $\omega_m=1$, and
	$\omega_i = 0$ for $i<0$ and $i>m$. This yields a scattering map that is close to the identity map, as follows.
	As $\omega$ is nonsymmetric, there exists a largest integer $r$,  $0<r<m$, with $\omega_r\ne 0$. Let $\eta\in\Sigma$ be obtained by $\eta_i = \omega_i$ for $i \ne m$ and $\eta_m = 0$.
	By the choice of $m$, the scattering map $\psi_\eta$ of the homoclinic leaf $W^c(\eta)$ is the identity map. 
	From $\eta_j = 0$ for $j > r$ we find $f^i_{\sigma^{r+1} \eta} = \text{id}$ for all $i \ge 0$.
	Compare  $f_{\sigma^{r+1} \omega}^i$ and $f_{\sigma^{r+1}\eta}^i$ for $i \ge 0$. As $\omega_j$ for $j>r$ is the symbol $0$ except for $\omega_m$ which is $1$, we find that  $f_{\sigma^{r+1} \omega}^i$
	is $C^1$-close to the identity for $k$ large. This follows by combining Proposition~\ref{p:skew} (establishing that fiber maps $f_\omega$ converge to the identity map if $k \to \infty$)
	 and Lemma~\ref{l:n} (asserting that compositions of fiber maps $f^j_{0\cdots 0}$ converge to the identity map if $k \to \infty$).
	We conclude that the scattering maps of $W^c(\eta)$ and $W^c(\omega)$ are arbitrary close for sufficiently large $k$.  
\end{proof}	
	
	\subsection{Iterated function systems}

	Consider the homoclinic leaf $\{r_a\}$ from Lemma~\ref{l:27}. Let $L$ be the corresponding skew product system
	as in \eqref{e:L}.
	Let $\sigma>0$.
	Proposition~\ref{p:Wn}  assures the existence of $\ell>0$ and a neighbourhood $\mathcal{W}_\ell$ of 
	$\{p_a\}\cup \{r_a\}\cup \{Rr_a\}$
	so that an iterate
	$\Pi^{L(\ell)}$ is topologically conjugate to a skew product map $G_\ell: \Omega\times J \to \Omega \times J$  with $|G_\ell - L|_1 < \sigma$. The fiber maps that make up $L$ are the scattering map $\psi$, its inverse $\psi^{-1}$,
	and the identity map.   We first consider the iterated function system on $J$ generated by the maps $\psi$, $\psi^{-1}$. 
		
	\begin{lemma}\label{l:ifs}
		Assume $d_1(\psi,\textrm{id})$ as in Lemma~\ref{l:27} is small. Then,
		possibly after interchanging the maps $\psi$ and $\psi^{-1}$,
		there are intervals $(a,b), (b,c), (c,d)$ inside $J$ with $\psi,\psi^{-1}$ defined on an interval containing $[a,d]$ and 
		\begin{enumerate}
			\item 
			\label{i:ifs1} $\psi^2$ maps $[a,b]$ into $(b,c)$;
			\item 
			$\psi^{-1}$ maps $[b,c]$ into $(a,c)$;
			\item 
			$\psi^{-2}$ maps $[c,d]$ into $(b,c)$;
			\item
			\label{i:ifs4} $\psi$ maps $[b,c]$ into $(b,d)$.
		\end{enumerate}
	\end{lemma}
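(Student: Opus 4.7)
The plan is to exploit the $C^1$-closeness of $\psi$ to the identity together with $\psi\ne\textrm{id}$: on a carefully chosen interval of length proportional to the displacement, $\psi$ acts like a small perturbation of a pure translation, and the four inclusions reduce to arithmetic on scaled intervals.

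Write $\epsilon(x) := \psi(x) - x$. Since $\psi\ne\textrm{id}$ there is $x_0\in J$ with $\epsilon(x_0)\ne 0$; after interchanging $\psi$ and $\psi^{-1}$ if needed, we may assume $\tau:=\epsilon(x_0)>0$. The hypothesis $d_1(\psi,\textrm{id})<\delta$ gives $\|\epsilon\|_\infty<\delta$ and $\|\epsilon'\|_\infty<\delta$, so on any interval $I\subset J$ of length $L$ containing $x_0$ the oscillation of $\epsilon$ is at most $L\delta$. Taking $I:=[x_0-5\tau,\,x_0+5\tau]$ (which sits inside $J$ once $\delta$ is sufficiently small and $x_0$ is picked away from $\partial J$), one obtains $\epsilon(x)\in(0.95\tau,\,1.05\tau)$ throughout $I$, provided $\delta$ is small enough that $10\delta<0.1$.

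Now set
\[
a:=x_0-1.5\tau,\qquad b:=x_0,\qquad c:=x_0+3\tau,\qquad d:=x_0+4.5\tau,
\]
so $a<b<c<d$ all lie in $I$. For the pure translation $x\mapsto x+\tau$ one checks directly that $[a,b]+2\tau\subset(b,c)$, $[b,c]-\tau\subset(a,c)$, $[c,d]-2\tau\subset(b,c)$ and $[b,c]+\tau\subset(b,d)$, with the nearest endpoint separated from the boundary of the target interval by at least $0.5\tau$ in every case. The estimate $\epsilon(x)\in(0.95\tau,1.05\tau)$ on $I$ shows that $\psi$ and $\psi^{-1}$ differ from the corresponding translations by at most $0.05\tau$, so two successive iterates contribute an error of at most $0.1\tau$; this is well below the $0.5\tau$ margin, and the strict inclusions (\ref{i:ifs1})--(\ref{i:ifs4}) are preserved.

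The only delicate point is ensuring that the auxiliary interval $I$ and its images under $\psi^{\pm 2}$ remain inside the common domain of $\psi$ and $\psi^{-1}$. This is automatic because $\psi$ is a $\delta$-small $C^1$ perturbation of the identity, so its domain misses only an $O(\delta)$-neighbourhood of $\partial J$, whereas $I$ has diameter $10\tau\le 10\delta$, and $x_0$ can be chosen at an interior point where $\epsilon$ does not vanish. I do not foresee any conceptual obstacle beyond this localization step; the rest is elementary interval arithmetic.
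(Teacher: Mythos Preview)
Your argument is correct and takes a different, more quantitative route than the paper. The paper proceeds by a case split: if $\psi(x)>x$ throughout $J$ it follows a long forward orbit of a point in $(-1,-\tfrac12)$ until it enters $(\tfrac12,1)$ and extracts $a<b<c<d$ from the orbit; if $\psi$ has a fixed point $s$, it restricts to a one-sided interval $(s,u)$ on which $\psi(x)>x$ and repeats the first argument there. You instead linearise at a single point: pick $x_0$ with $\tau=\epsilon(x_0)>0$, use $|\epsilon'|<\delta$ to pin $\epsilon$ to within $5\tau\delta$ of $\tau$ on the window $I=[x_0-5\tau,x_0+5\tau]$, and write down explicit $a,b,c,d$ for which the four inclusions hold for the pure translation $x\mapsto x+\tau$ with margin $\tfrac12\tau$, which absorbs the $O(\tau\delta)$ perturbation. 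Your approach buys concrete formulas and avoids the case distinction; the paper's version is shorter and more pictorial but correspondingly sketchier.

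One step deserves more care than you give it. The assertion that ``$x_0$ can be chosen at an interior point where $\epsilon$ does not vanish'' is not automatic: nothing rules out $\epsilon$ being supported only in an $O(\delta)$-collar of $\partial J$, in which case every $x_0$ with $\epsilon(x_0)\ne 0$ is near the boundary. The remedy is immediate from your own derivative estimate. If the only points with $\epsilon\ne 0$ lie near $\partial J$, choose $x_0$ close to a boundary point of the set $\{\epsilon=0\}$; then $\tau=\epsilon(x_0)\le\delta\cdot\mathrm{dist}(x_0,\{\epsilon=0\})$, so $5\tau\le 5\delta\cdot\mathrm{dist}(x_0,\{\epsilon=0\})\ll\mathrm{dist}(x_0,\partial J)$ for small $\delta$, and the window $I$ fits inside $J$ after all. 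With that adjustment the argument is complete.
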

	
	\begin{proof}
		If $\psi(u) > u$ on $J$ we can iterate a point in $(-1,-\frac{1}{2})\subset J$ many times 
		before it enters $(\frac{1}{2},1) \subset J$. We can thus take $a<b<c<d$ as
		in the statement of the lemma, as illustrated in Figure~\ref{f:ifs}.
		If $\psi(u) < u$ on $J$ we exchange $\psi,\psi^{-1}$ to get the result.
			\begin{figure}[ht]
			\begin{center}
\input{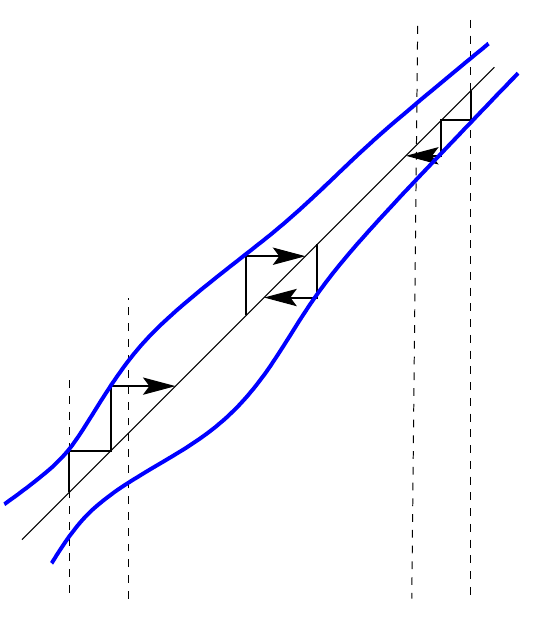_t}
			\end{center}
			\caption[]{The scattering maps $\psi,\psi^{-1}$ define an iterated function system on an interval.
				\label{f:ifs}}
		\end{figure}	
		
		If the graph of $\psi$ does intersect the diagonal, 
		there is a point $\psi(s)=s$ and an interval $(s,v)$ on which
		$\psi (u) > u$ (possibly interchanging $\psi,\psi^{-1}$). Inside $(s,v)$ one finds an interval $(a,d)$ as above. See Figure~\ref{f:ifs2}.
		\begin{figure}[ht]
			\begin{center}
\input{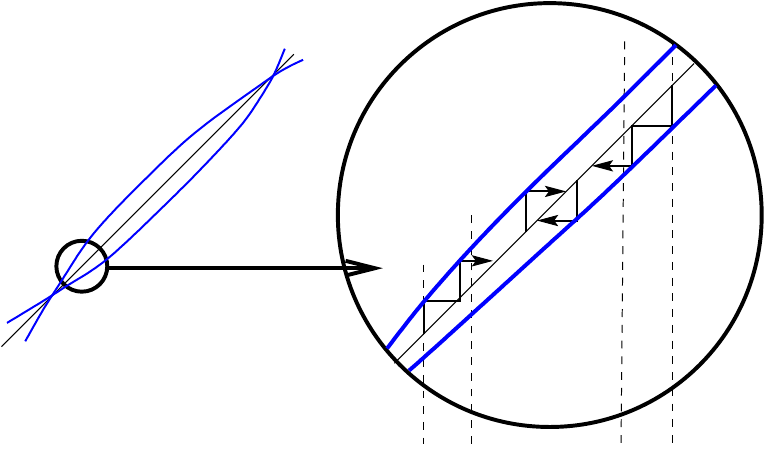_t}
			\end{center}
			\caption[]{The scattering maps $\psi,\psi^{-1}$ define an iterated function system on an interval, here in an example where it is necessary to restrict to a subinterval on which $\psi (u) \ne u$.
				\label{f:ifs2}}
		\end{figure}	
	\end{proof}

The iterated function system generated by $\psi$, $\psi^{-1}$ leads to the skew product system 
\[
O (\omega , u) =   \left\{   \begin{array}{ll} (\sigma \omega , \psi(u)), & \omega_0 = 0,  
\\ (\sigma \omega , \psi^{-1}(u)), & \omega_0 = 1, 
\end{array}
\right.
\]
on $\Sigma \times  J$.
Assuming the conclusion of Lemma~\ref{l:ifs}, we claim that 
this skew product system on its maximal invariant set has positive topological entropy.

Projecting the maximal invariant set $\Lambda$ of $O$ to $\Sigma$ by the natural coordinate projection 
we obtain a closed invariant set for the left shift operator
on $\Sigma$. It suffices to demonstrate the latter's positive topological entropy, since it is a factor.
By \cite[Chapter~4 and Section~6.3]{linmar95} this topological entropy $h$ is given by
\[
h = \lim_{i\to \infty}  \frac{1}{i} \ln |B_i|
\]
holds, where $B_i$ is the set of blocks of length $i$ and $|B_i|$ is its cardinality.
Outside $[b,c]$  applying either $\psi^2$ or $\psi^{-2}$ maps back into $[b,c]$.
On $[b,c]$ both $\psi$ and $\psi^{-1}$ are well defined and map into $[a,d]$: on $[b,c]$ both maps can be applied.
Starting at some point in $[b,c]$, every three iterates there are at least two possible compositions available that map back into $[b,c]$. We therefore obtain $h>0$.
		
\subsection{Conclusion}
	
Take $k$ large and consider $\Psi = \Pi^{K(k)}$ on $\Sigma \times J$ as in Proposition~\ref{p:skew}.
	Let $0 < t <1$. We distinguish two possible cases, where the second case is the earlier used assumption. 
	\begin{enumerate}
		\item For each $\omega \in \Sigma$, $|f^i_\omega (0) | 	\le  t$ for all $i \in \mathbb{N}$;
		
		\item there is $\zeta \in \Sigma$ with $|f^h_\zeta (0) | > t$ for some $h \in \mathbb{N}$.\\ 
	\end{enumerate}	
	
	\noindent Case (1): The condition implies that for every $\omega\in \Sigma$ there is a point
	in $W^c (\omega)$ with itinerary $\omega$. There is thus an invariant set for $\Psi$ that projects 
	to $\Sigma$. The topological entropy of $\Psi$, and hence $\Pi$, is therefore positive. \\
	
	\noindent Case (2): Let $\{r_a\}$ be a nonsymmetric homoclinic orbit provided by Lemma~\ref{l:27}, with $d_1(\psi,\textrm{id}) < \delta$ small.
	Consider the skew product system $G_\ell: \Omega \times J \to \Omega \times \mathbb{R}$ provided by Proposition~\ref{p:Wn}. Let $\ell$ be large
	and $\Omega' \subset \Omega$ be the space of symbolic sequences in $\Omega$ in which $0$ does not occur and $121$ and $212$ also do not occur. The shift on $\Omega'$ is a subshift of finite type. Restricting $G_\ell$ to $\Omega'\times J$,
	the skew product map $(\nu,u) \mapsto G_\ell (\nu,u)$ has fiber maps $g_\nu$  which, for $\ell$ large enough, are arbitrarily close to
	$\textrm{id}$, $\psi$, or $\psi^{-1}$.  Moreover, since $121$ and $212$ are forbidden subsequences, there is at most one identity map in a row. If we compose any fiber map close to the identity map with the subsequent map (which is either $\psi$ or $\psi^{-1}$), we obtain a composition
	of maps that are arbitrarily close to $\psi$ or $\psi^{-1}$, for large enough $\ell$.
	The argument to prove positive topological entropy for the iterated function system generated by $\psi$, $\psi^{-1}$,
	given above, applies to show that the skew product map $G_\ell$ has positive topological entropy on its maximal invariant set. Indeed, although the fiber maps are not exactly $\psi$ or $\psi^{-1}$, they are $C^1$ close so that 
	the properties \eqref{i:ifs1}-\eqref{i:ifs4} listed in Lemma~\ref{l:ifs} still hold for $\psi$, $\psi^{-1}$ replaced by the nearby fiber maps. 
	
	We proceed  to call a cylinder $\Omega = [\omega_{-m}\cdots\omega_n]$ {\em good} if for any infinite $\omega \in \Omega$ we have $g_\omega^i (0) \in (a,d)$ for $-m \le i \le n$. Write for instance $\Omega\, 2$ for the cylinder
	$[\omega_{-m}\cdots\omega_n 2]$.
	By construction, if $\Omega$ is good, then both $\Omega\, 1$ and $\Omega\, 2$ are also good if $g_\omega^n (0) \in (b,c)$. Otherwise, if $g_\omega^n (0) > c$, then $\tilde{\Omega} = \Omega\, 22$ is good, and if $g_\omega^n (0) < b$, then $\tilde{\Omega} = \Omega\, 11$ is good, and
	$g_{\tilde\omega}^{n+2} (0) \in (b,c)$ in both these cases, so both $\tilde{\Omega}\, 1$ and $\tilde{\Omega}\, 2$ are good. The same is true for the expansion of $\Omega$ to the left. Thus, one constructs the growing set of cylinders such that in the limit we obtain the set of codes corresponding to fibers which contain a bounded orbit. As the number of good cylinders grows exponentially with length, the topological entropy is positive.
	 This concludes the proof of Theorem~\ref{t:mainpi}.

\section{Local normal forms}\label{s:lnf}

The final two sections of this paper develop some important technical results, 
in particular expansions  for the local transition map and existence and regularity of invariant center laminations.
We first develop expansions of the return map on $S_0$ close to $p_0$. 
Let $\mathcal{U}_0$ be a small tubular neighbourhood of $\gamma_0$.
Consider the first return map $\Pi_0: S_0\to S_0$ considering orbits in $\mathcal{U}_0$. Note that $\Pi_0$ is defined on an open neighbourhood of $p_0$ in $S_0$, nonetheless we write $\Pi_0: S_0\to S_0$.
Local stable and unstable manifolds of $p_a$ are denoted by 
$W^s_{\textrm{loc}} (p_a)$ and $W^u_{\textrm{loc}} (p_a)$; they are intersection with $S_0$ of local stable and unstable manifolds
$V^s_{\textrm{loc}} (\gamma_a)$ and $V^u_{\textrm{loc}} (\gamma_a)$ of the periodic orbit $\gamma_a$ for the flow.

\begin{lemma}\label{l:normalform}
	There exist smooth coordinates $(x,y,z)$ from an open set in $\mathbb{R}^{n-1}\times \mathbb{R}\times \mathbb{R}^{n-1}$ on $S_0$ such that
	$p_a = (0,0,a)$ and
	$(x_1,y_1,z_1) = \Pi_0 (x_0,y_0,z_0)$ has the following expression:
	\begin{align}
	\nonumber
	x_{1} &=  A(z_0) x_0 + e(x_0,y_0,z_0),
	\\
	\nonumber
	y_{1} &= A^{-1}(z_0) y_0 + f(x_0,y_0,z_0),
	\\
	\label{xyz}
	z_{1} &=  z_0 + g(x_0,y_,z_0), 
	\end{align}
	where the matrix $A \in \mathbb{R}^{n-1 \times n-1}$ depends smoothly on $z_0$
	and satisfies
	$ \| A (0) x \| < \lambda \| x\|$ for some $0< \lambda <1$.
	Further, $e,f,g$ are smooth functions satisfying
	\begin{align}
	\nonumber
	e(x_0,y_0,z_0) &=  \mathcal{O}(x_0^2) + 	\mathcal{O} (x_0 y_0),
	\\
	\nonumber
	f(x_0,y_0,z_0) &= 	\mathcal{O} (y_0^2) + \mathcal{O} (x_0 y_0),
	\\
	\label{efg}
	g(x_0,y_0,z_0) &= \mathcal{O} (x_0 y_0).	
	\end{align}
\end{lemma}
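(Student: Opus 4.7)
The plan is to produce adapted smooth coordinates on $S_0$ in which the curve of fixed points, together with the invariant center-stable and center-unstable manifolds of the family $\{p_a\}$, take a standard ``cross'' form, and then read off the prescribed structure of $\Pi_0$ from invariance of these manifolds together with a reversibility-derived identity between the stable and unstable linearisations.

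First I would set up the geometry. The one-parameter family of symmetric periodic orbits $\gamma_a$ intersects $S_0$ along a smooth curve of fixed points $\{p_a\}$ of $\Pi_0$, which is a normally hyperbolic invariant 1-manifold. Standard invariant-manifold theory (Fenichel) then supplies smooth center-stable and center-unstable manifolds $W^{cs} = \bigcup_a W^s_{\mathrm{loc}}(p_a)$ and $W^{cu} = \bigcup_a W^u_{\mathrm{loc}}(p_a)$ meeting transversely along $\{p_a\}$. I then choose smooth coordinates $(x,y,z) \in \mathbb{R}^{n-1}\times\mathbb{R}^{n-1}\times\mathbb{R}$ on $S_0$ with $\{p_a\}=\{(0,0,a)\}$, $W^{cs}=\{y=0\}$, $W^{cu}=\{x=0\}$, and with $z$ chosen so that on $W^{cs}$ the level set $\{z=a\}$ is precisely $W^s_{\mathrm{loc}}(p_a)$ and symmetrically on $W^{cu}$; away from $W^{cs}\cup W^{cu}$ I extend $z$ by any smooth interpolation.

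In these coordinates the functional form of the lemma is essentially automatic. Invariance of $W^{cs}=\{y=0\}$ under $\Pi_0$ forces $f(x,0,z)=0$, while invariance of its stable-leaf foliation forces $g(x,0,z)=0$; the symmetric statements on $W^{cu}$ give $e(0,y,z)=0$ and $g(0,y,z)=0$. Hence $g=\mathcal{O}(xy)$ directly, and one can write $e = x\,\tilde e(x,y,z)$ and $f = y\,\tilde f(x,y,z)$ for smooth $\tilde e,\tilde f$. Since the $x$-linearisation of $\Pi_0$ at each $p_a$ is by definition $A(a)$, one has $\tilde e(0,0,z)=0$, yielding $e=\mathcal{O}(x^2)+\mathcal{O}(xy)$, and analogously for $f$. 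The contractivity $\|A(0)x\|<\lambda\|x\|$ is just the saddle hypothesis on $\gamma_0$, enforced by a final linear change of $x$-coordinates if required.

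The main obstacle, and the only place where reversibility enters, is arranging the unstable block of $D\Pi_0(p_a)$ to equal $A(z)^{-1}$ rather than some other matrix with reciprocal spectrum. Using $D_xR\,f(x) = -f(Rx)$ together with $R(S_0)=S_0$ and $p_0\in\mathrm{Fix}\,R$, I would check that the return map satisfies the reversibility relation $R|_{S_0}\circ \Pi_0\circ R|_{S_0}=\Pi_0^{-1}$ on its domain; the matching of return times under the reversed and forward flows is the point to verify. Linearising this identity at $p_0$ and using that $R_*$ swaps the stable and unstable eigenspaces (since $R\,W^s_{\mathrm{loc}}(p_a)=W^u_{\mathrm{loc}}(p_a)$), one sees that the unstable block is conjugate to the inverse of the stable block, and a further linear change of $y$-coordinates depending smoothly on $z$ realises the identification. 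The slightly delicate point is bookkeeping: all of these successive coordinate changes must be made simultaneously smooth in $(x,y,z)$ while preserving the already-straightened forms of $W^{cs}$, $W^{cu}$ and their leaves -- a routine but somewhat technical matching argument of the kind standard in normal-form computations.
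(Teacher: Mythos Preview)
Your proposal is correct and follows essentially the same route as the paper: straighten the individual stable and unstable leaves $W^s_{\mathrm{loc}}(p_a)=\{y=0,z=a\}$ and $W^u_{\mathrm{loc}}(p_a)=\{x=0,z=a\}$, read off the estimates on $e,f,g$ from invariance of these leaves, and invoke reversibility to force the unstable linear block to be $A(z)^{-1}$. The only minor difference is in how the last step is packaged: you conjugate the unstable block to $A^{-1}$ by a $z$-dependent linear change in $y$, whereas the paper (in the paragraph immediately following its proof) instead arranges $R(x,y,z)=(y,x,z)$ via the symmetrisation $(x,y,z)\mapsto\frac12[(x,y,z)+R'(0)R(x,y,z)]$, after which the reversibility identity $R\,D\Pi_0(p_a)\,R=D\Pi_0(p_a)^{-1}$ yields $B=A^{-1}$ directly rather than up to conjugacy.
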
 

\begin{proof}
	Take local coordinates $(x,y,z)$ near $p_0$ for which 
	$p_a = (0,0,a)$ and 
	\begin{align*}
	W^u_{\textrm{loc}} (p_a) &=  \{x=0,z = a\}, 
	\\
	W^s_{\textrm{loc}} (p_a) &=  \{y=0,z = a\}.
	\end{align*}
	Note that $W^u_{\textrm{loc}}(p_a)$ and $W^s_{\textrm{loc}}(p_a)$ are invariant for every $a$, from which expressions (\ref{xyz}) and estimates (\ref{efg}) follow.  
	By the reversibility, the restriction of the map on $W^s_{\textrm{loc}} (p_a)$ is conjugate to the restriction of the inverse map on $W^u_{\textrm{loc}} (p_a)$,
	therefore the linearization matrices, at $p_a$, of the map on $W^s_{\textrm{loc}} (p_a)$ and the map on $W^u_{\textrm{loc}} (p_a)$ are, indeed, 
	inverse to each other.
\end{proof}

The curve $\{p_a\}$ of fixed points has a local center stable manifold 
$W^{sc}_{loc} (\{p_a\})$, which is a union of local  stable manifolds 
$W^s_{loc} (p_a)$ of individual fixed points.
Similarly the local unstable manifold 
$W^{cu}_{loc} (\{p_a\})$  is foliated by local unstable manifolds
$W^u_{loc}(p_a)$.

We may take the coordinates in Lemma~\ref{l:normalform} such that the action of $R$ is locally linear \cite{bochner} and given by $R (x,y,z) = (y,x,z)$. 
Indeed, $R W^u_{\textrm{loc}} (p_a) = W^s_{\textrm{loc}} (p_a)$. Therefore, writing $R(x,y,z)=(R_1(x,y,z), R_2(x,y,z), R_3(x,y,z))$, we have
$$R_1(x,0,z)=0, \qquad R_2(0,y,z) = 0, \qquad R_3(0,x,z)=R_3(x,0,z)=z.$$
It also follows that the derivative of $R$ at zero is $R'(0): (x,y,z) \mapsto (y,x,z)$. These formulas imply that the coordinate transformation
$$(x,y,z)^{new} = \frac{1}{2} [(x,y,z) + R'(0) R (x,y,z)]$$
linearizes the involution $R$ while maintaining the validity of the expressions (\ref{xyz}) and estimates (\ref{efg}).

Fix $\lambda$ with $\| A(0)\| < \lambda < 1$ and take $\varepsilon>0$ so that for
$|z| < \varepsilon$,  
\[
\|A(z)\| < \lambda.
\]
We further assume that the neighbourhood $\mathcal{U}_0$ of the periodic orbit under consideration is chosen small enough so that $\mathcal{U}_0 \cap S_0$ 
is contained in $\{|z| < \varepsilon\}$. 

The following lemma yields estimates for iterates near $p_0$ in cross coordinates.
See \cite{shishiturchu98,shishiturchu01} for an introduction to the theory and use of cross coordinates in dynamics.
Note that the estimates improve for orbits with higher number of iterates that stay closer to the local stable and unstable manifolds of $p_a$. 

\begin{lemma}\label{l:n}
	Assume $(x_i,y_i,z_i) = \Pi_0^i (x_0,y_0,z_0) \in S_0$ for all $0\le i \le k$.
	Then $(x_k,y_k,z_k) = \Pi_0^k (x_0,y_0,z_0)$ can be solved for
	$(x_k,y_0,z_k)$ as function of $(x_0,y_k,z_0)$, with the following expression:
	\begin{align*}
	x_k &= 
	R_x (x_0,y_k,z_0),
	\\
	y_0 &=  
	R_y(x_0,y_k,z_0),
	\\
	z_k &= z_0 + R_z(x_0,y_k,z_0).
	\end{align*}
	For some $C>0$, independent of $k$, 
	\begin{align*}
	|R_x (x_0,y_k,z_0)|, |DR_x (x_0,y_k,z_0)| &\le C  \lambda^{k},
	\\
	|R_y (x_0,y_k,z_0)|, |DR_y (x_0,y_k,z_0)| &\le C  \lambda^{k},
	\\
	|R_z (x_0,y_k,z_0)|, |DR_z (x_0,y_k,z_0)| &\le C  \lambda^{k/2}.
	\end{align*}		
\end{lemma}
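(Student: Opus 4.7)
The plan is to prove Lemma~\ref{l:n} by the standard \emph{cross coordinates} graph transform: for each fixed $k$, the orbit segment of length $k$ is constructed as the unique fixed point of a contraction on a Banach space of candidate segments in which the mixed boundary data $(x_0,y_k,z_0)$ is prescribed. This is the natural setup because $x$ is contracting forward, $y$ is contracting backward, and $z$ is central.

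First I would introduce the space $E_k$ of triples of sequences $(x_i,y_i,z_i)_{0\le i\le k}$ in a small neighbourhood of the origin, equipped with the weighted norm
\[
\|(x,y,z)\| = \max_{0\le i\le k}\bigl\{\lambda^{-i}|x_i|,\;\lambda^{-(k-i)}|y_i|,\;|z_i-z_0|\bigr\},
\]
tailored to the expected exponential decay. On $E_k$ define the operator $T$ which applies the forward recursion from Lemma~\ref{l:normalform} in the $x$ and $z$ directions and the implicit backward recursion $y_i = A(z_i)\bigl[y_{i+1}-f(x_i,y_i,z_i)\bigr]$ in the $y$ direction, keeping $(x_0,y_k,z_0)$ fixed. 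The implicit relation for $y_i$ is uniquely solvable because $f = O(|y|^2)+O(|x||y|)$ is small, so the implicit function theorem applies uniformly in $i$.

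Next I would verify that $T$ preserves a suitable small ball in $E_k$ and is a contraction. The linear part of $T$ is $A(z_i)$ acting on $x$ forward and on $y$ backward, both of norm strictly less than $\lambda$, exactly matching the weights; the nonlinearities $e,f,g$ from Lemma~\ref{l:normalform} are all quadratic and produce only higher-order corrections in the weighted norm. In particular, the $z$-component is driven by $g=O(|x||y|)=O(\lambda^k)$ per step, consistent with the weight $|z_i-z_0|$. From the fixed point I then read off $R_x := x_k$, $R_y := y_0$ and $R_z := z_k-z_0$. The weighted estimate yields $|x_i|\le C\lambda^i$ and $|y_i|\le C\lambda^{k-i}$, hence $|R_x|,|R_y|\le C\lambda^k$, while summing $|z_{i+1}-z_i|\le C|x_i||y_i|\le C\lambda^k$ over $i$ gives $|R_z|\le C k\lambda^k \le C'\lambda^{k/2}$.

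For the derivative bounds I would differentiate the fixed point equation with respect to the parameters $(x_0,y_k,z_0)$ and apply the fibre contraction theorem to the linearised system, which inherits the same block-triangular structure with the same $\lambda$-weighted norms, so the estimates transfer to $DR_x,DR_y,DR_z$ with the identical rates. The main obstacle I expect is the sharp control of the centre-direction drift: the per-step estimate $|z_{i+1}-z_i|=O(\lambda^k)$ is uniform in $i$, so a careless telescoping yields only $Ck\lambda^k$, and one must absorb the $k$-factor into $C'\lambda^{k/2}$ and carry the loss through the linearised equation without further degradation, which relies crucially on the product structure $Dg = O(|x|)\,dy + O(|y|)\,dx$ inherited from $g=O(|x||y|)$.
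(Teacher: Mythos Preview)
Your proposal is correct and follows the standard cross-form (boundary-value) contraction argument for normally hyperbolic local maps. The paper's own proof of this lemma is a single line citing \cite[Lemma~1]{geltur17}; you have effectively reconstructed the content of that reference, and the machinery you describe---a fixed point on weighted sequence spaces with the mixed boundary data $(x_0,y_k,z_0)$, then differentiation of the fixed-point equation for the derivative estimates---is precisely the Shilnikov-style cross-coordinate technique the paper invokes just before the statement via \cite{shishiturchu98,shishiturchu01}. Your handling of the centre drift (per-step $O(\lambda^k)$ from $g=O(xy)$, telescoped to $O(k\lambda^k)\le C'\lambda^{k/2}$) and of the derivative bound via the product structure of $Dg$ is the right idea and matches what the cited lemma delivers.
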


\begin{proof}
This follows from \cite[Lemma~1]{geltur17}.	
\end{proof}

As $q_a \in W^{sc} (\{p_a\})$ we can also define the
local center stable manifold 
$W^{sc}_{loc} (\{q_a\})$ of the curve $\{q_a\}$ of homoclinic points
as the connected component of  $W^{sc} (\{p_a\}) \cap S_1$ that contains $\{q_a\}$.
It is foliated by local stable manifolds $W^s_{loc} (q_a)$.
Similarly, the
local center unstable manifold 
$W^{sc}_{loc} (\{q_a\})$
is foliated by local unstable manifolds $W^u_{loc} (q_a)$.
Observe that
\begin{align*}
\{p_a\} &=  W^{sc}_{loc} (\{p_a\}) \cap W^{cu}_{loc} (\{p_a\}),
\\
\{q_a\} &=  W^{sc}_{loc} (\{q_a\}) \cap W^{cu}_{loc} (\{q_a\}).
\end{align*}
Throughout, we assume smooth coordinates $(x,y,z)$ on $S_1$ in which
\begin{align*}
W^{sc}_{loc} (\{q_a\}) &=  \{y=0\},
\\
W^{cu}_{loc} (\{q_a\}) &=  \{x=0\},
\end{align*}
and also
\begin{align}
\nonumber
W^s_{loc} (q_a) &=  \{y=0,z=a\},
\\
\label{e:coordnearq}
W^u_{loc} (q_a) &= \{x=0,z=a\}.
\end{align}

\section{Center laminations}\label{s:clsp}

The existence of a locally invariant center lamination $\mathcal{F}^c$ of one dimensional leaves is key to Proposition~\ref{p:skew}.
This center lamination extends the curves of fixed points $\{p_a\}$ and homoclinic points $\{q_a\}$. Its
existence and stated regularity properties, together with the normal form expansions
in cross coordinates obtained in the previous section, establish Proposition~\ref{p:skew}.
For discussions on the existence of center laminations in similar contexts, see also \cite{homkno06,homlam06}. 

Our proof of the next proposition is related to the cross coordinates from Lemma~\ref{l:normalform} and Lemma~\ref{l:n}. 
We let $\mathcal{F}^c_u$ represent the leaf of $\mathcal{F}^c$ through a point $u$.

\begin{proposition}\label{p:Vn}
	There is a sequence of decreasing neighbourhoods $\mathcal{V}_k$ of $\{ p_a\}\cup \{q_a\}$
	with iterates $\Psi = \Pi^{K(k)}$ so that: 
	\begin{enumerate}[(i)] 
	\item 	
	There exists a center lamination $\mathcal{F}^c$ inside $\mathcal{V}_k$ 
	with one dimensional leaves, containing the
	curves of fixed points $\mathcal{F}^c_{p_0} = \{p_a\}$ and of
	homoclinic points $\mathcal{F}^c_{q_0} = \{q_a\}$.
	The leaves $\mathcal{F}^c_v$ are continuously differentiable
	and the bundle of tangent lines $T_v\mathcal{F}^c_v$ depends continuously on $v\in\mathcal{V}_k$.
	
	\item There is a homeomorphism $W^c: \Sigma \to \mathcal{F}^c$ with \[\Psi (W^c(\omega)) \subset W^c(\sigma\omega).\]
	\end{enumerate}
\end{proposition}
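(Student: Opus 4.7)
The plan is to construct $\mathcal{F}^c$ as the fixed point of a Hadamard--Perron style graph transform on a space of sections indexed by $\Sigma$, using the cross-coordinate estimates of Section~\ref{s:lnf}. First, we organise the geometry. Near $p_0$, use the coordinates $(x,y,z)$ of Lemma~\ref{l:normalform}: $x$ is strongly stable, $y$ strongly unstable, and $z$ is the central direction parametrising $\{p_a\}$; Lemma~\ref{l:n} gives sharp cross-coordinate estimates showing that high iterates of $\Pi_0$ are contracting with rate $\lambda^k$ in the $(x,y)$-directions and nearly isometric (rate $\lambda^{k/2}$) in $z$. Near $q_0$, use the coordinates of \eqref{e:coordnearq}; the strong-transversality assumption on $\eta_0$ says exactly that the global map from $S_0$ to $S_1$ sends $W^{cu}_{loc}(\{p_a\})$ transversely across $W^{sc}_{loc}(\{q_a\})$ and $W^{sc}_{loc}(\{p_a\})$ transversely across $W^{cu}_{loc}(\{q_a\})$. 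Choose $K(k)$ large and $\mathcal{V}_k$ a narrow tubular neighbourhood of $\{p_a\} \cup \{q_a\}$ so that any orbit of $\Psi := \Pi^{K(k)}$ that stays in $\mathcal{V}_k$ can be assigned an itinerary $\omega \in \Sigma$, where $\omega_i = 0$ or $1$ records whether the $i$th iterate $\Psi^i$ is near $\{p_a\}$ or near $\{q_a\}$.

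Let $\mathcal{X}$ be the complete metric space of families $\Gamma = \{\Gamma_\omega\}_{\omega \in \Sigma}$, where each $\Gamma_\omega$ is a $C^1$ curve contained in $S_{\omega_0}$, parametrised by $z$, with $C^0$-distance at most $\delta$ from $\{p_a\}$ (if $\omega_0 = 0$) or from $\{q_a\}$ (if $\omega_0 = 1$), and with slopes (in $(x,y)$ as functions of $z$) bounded by some $L$. Define the graph transform $\mathcal{T}: \mathcal{X} \to \mathcal{X}$ by letting $(\mathcal{T}\Gamma)_\omega$ be the connected component of $\Psi^{-1}(\Gamma_{\sigma\omega}) \cap \mathcal{V}_k$ lying in $S_{\omega_0}$. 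The cross-coordinate description of Lemma~\ref{l:n} for the long local passage near $\{p_a\}$, combined with the transverse global map near $q_0$, shows that for $K(k)$ large enough $\mathcal{T}$ is a well-defined uniform contraction on $\mathcal{X}$: the $(x,y)$-slopes of $\Gamma_{\sigma\omega}$ pulled back by $\Psi$ contract by a factor $O(\lambda^{K(k)})$, while the near-identity $z$-dynamics guarantees that the pulled-back curve projects diffeomorphically to $z$. The unique fixed point $\{W^c(\omega)\}_{\omega \in \Sigma}$ gives the lamination, with $\Psi(W^c(\omega)) \subset W^c(\sigma\omega)$ built into the construction. The constant families $\{p_a\}$ (for $\omega = \overline{0}$) and the family containing $\{q_a\}$ (for the sequence with a single $1$ at position $0$) are themselves invariant under $\mathcal{T}$, so by uniqueness $W^c(\overline{0}) = \{p_a\}$ and $W^c$ of the appropriate sequence equals $\{q_a\}$.

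For the regularity and coding claims, iterate $\mathcal{T}$ in the $C^1$ topology; the induced action on the tangent line field of leaves is again a contraction at rate $O(\lambda^{K(k)/2})$ because of the $DR_z$ estimate in Lemma~\ref{l:n}, so the fixed point has a continuous tangent field, yielding $C^1$ leaves with continuously varying $T_u \mathcal{F}^c_u$. Continuity of $W^c: \Sigma \to \mathcal{F}^c$ is immediate from the contraction: if $\omega, \omega'$ agree on a long central block $[-N,N]$, their leaves differ by $O(\lambda^{cN})$ via Lemma~\ref{l:homnodrift} combined with the hyperbolicity in the normal directions. Injectivity, hence homeomorphy, follows because distinct itineraries eventually separate under $\Psi$ while leaves of $\mathcal{F}^c$ are disjoint. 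The \emph{main obstacle} is ensuring the graph transform is genuinely a contraction through the homoclinic excursion near $q_0$: the global connecting map contributes a fixed (not small) Lipschitz factor, so the contraction must be extracted entirely from the long local passage near $\{p_a\}$. This is why $\Psi$ is taken as the $K(k)$th iterate of $\Pi$ with $K(k) \to \infty$: the exponential decay $\lambda^{K(k)}$ from Lemma~\ref{l:n} is what forces $\mathcal{T}$ to contract despite the bounded global transition, and is also what makes $F_k \to I$ in the $C^1$-norm in the sequel of Proposition~\ref{p:skew}.
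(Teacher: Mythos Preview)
Your graph-transform operator $\mathcal{T}$, defined by $(\mathcal{T}\Gamma)_\omega = \Psi^{-1}(\Gamma_{\sigma\omega})\cap\mathcal{V}_k$, is \emph{not} a contraction on the space $\mathcal{X}$ of curves graphed over the central variable $z$. The reason is that a center leaf is transverse to \emph{both} the strong-stable and the strong-unstable directions, so neither the forward nor the backward graph transform alone can contract the full $(x,y)$-displacement between two trial curves. Concretely: $\Psi$ contracts $x$ by $O(\lambda^{K(k)})$, so $\Psi^{-1}$ \emph{expands} $x$ by $O(\lambda^{-K(k)})$. If two curves $\Gamma^1_{\sigma\omega},\Gamma^2_{\sigma\omega}\subset V_2$ differ by $\varepsilon$ in their $x$-components, their preimages under $\Psi^{-1}$ differ by roughly $\lambda^{-K(k)}\varepsilon$ in $x$. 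Your claim that ``the $(x,y)$-slopes \dots\ contract by a factor $O(\lambda^{K(k)})$'' conflates the cone-field/slope contraction (which is true and gives an invariant tangent bundle) with $C^0$-contraction of the graphs themselves (which fails in one of the two normal directions). This is exactly the well-known obstruction that makes center manifolds non-unique and forces one beyond a naive Hadamard argument.

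The paper resolves this by the Lyapunov--Perron route in exponentially weighted sequence spaces, following Vanderbauwhede. For each $\omega$ and each anchor value $z_0$, an operator $\mathcal{H}$ is defined on bi-infinite sequences of points (written in cross-coordinates via the maps $\Phi^{\pm}_{ij}$), whose fixed points are genuine $\Psi$-orbits. The crucial point is that $\mathcal{H}$ maps the space $\mathcal{C}$ of bounded sequences into itself but is a contraction only in the weighted norm $\|\gamma\|_\alpha=\sup_i\alpha^{-|i|}\|\gamma(i)\|$ with $\alpha>1$; the weight absorbs the neutral center direction. Differentiability of the resulting leaf $z_0\mapsto w(z_0)$, and continuity of its tangent in $\omega$, then come from the two-scale trick $\mathcal{H}:\mathcal{C}_{\alpha'}\to\mathcal{C}_{\alpha}$ for $\alpha'<\alpha$. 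An alternative fix available to you would be to run two separate one-sided graph transforms---one forward (giving $\mathcal{F}^{cu}$) and one backward (giving $\mathcal{F}^{sc}$)---and obtain the center lamination as their transverse intersection; the paper in fact uses this viewpoint later in Proposition~\ref{p:strongstable}. Either way, the single backward transform you propose does not suffice. (A minor additional issue: you invoke Lemma~\ref{l:homnodrift} for continuity of $W^c$, but that lemma already presupposes the strong laminations of Proposition~\ref{p:strongstable}, which are built on top of the present proposition.)
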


\begin{proof}
	Consider local coordinates $(x,y,z) \in \mathbb{R}^{n-1}\times\mathbb{R}\times\mathbb{R}^{n-1}$ near $p_0$ as in Lemma~\ref{l:normalform}.
	We can find  arbitrary large integers $k$ and sets $V_1$, $V_2$ that are small neighbourhoods of 
	$W^{sc}_{loc} (p_0)$, $W^{cu}_{loc}(p_0)$,
	with 
	\[
	V_2 = \Pi^k(V_1)
	\] 
	and 
	\begin{align*}
	V_1 &\subset \{ |x| \le 1, |y| \le C\lambda^k , |z| \le c  \}, 
	\\
	V_2 & \subset 
	\{ |x| \le C\lambda^k , |y| \le 1, |z| \le c\}
	\end{align*}
	for uniform constants $C>0$, $c>0$.
	This is similar to the  construction for (planar) diffeomorphisms in
	\cite{paltak93}, with an additional $z$-direction included, see Figure~\ref{f:horseshoe}. 
	\begin{figure}[ht]
		\begin{center}
\input{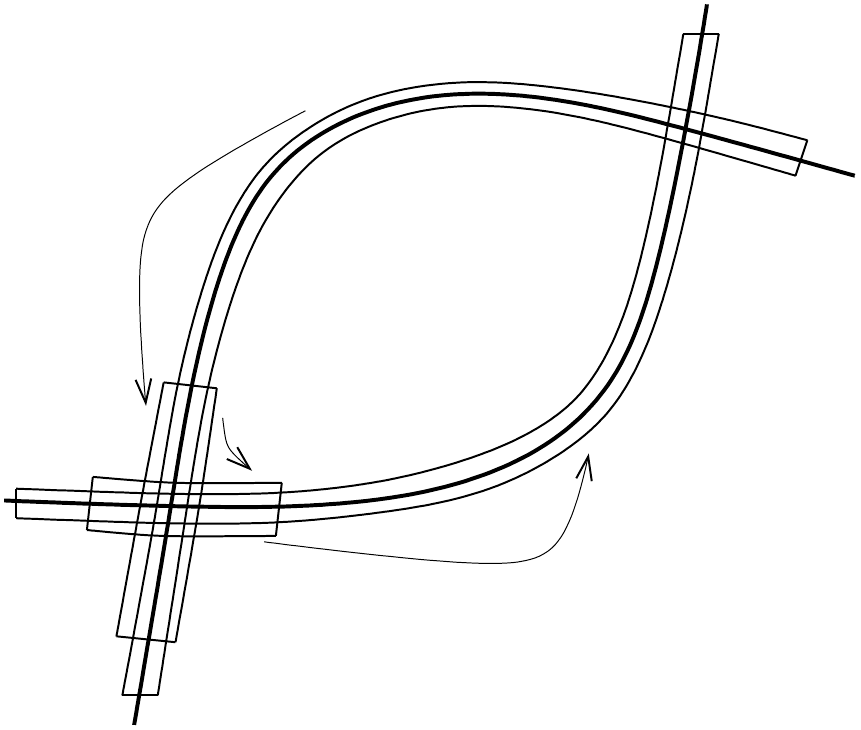_t}
		\end{center}
		\caption[]{Sketch of the main elements in the 
		construction of a horseshoe near a homoclinic tangle as employed in the proof of Proposition~\ref{p:Vn}, omitting
		the additional center coordinate and under the assumption of a single cross section.
			\label{f:horseshoe}}
	\end{figure}
	
	There exists $l_+>0$ so that $q_0 \in \Pi^{l_+} (V_2)$ and
	$l_->0$ so that $\Pi^{l_-} (q_0) \in V_1$. 
	The numbers $l_+,l_-$ do not depend on $k$. (By taking $S_0$ and $S_1$ small we may in fact assume $l_- = l_+ = 1$.)
	Note that $\Pi^{l_- + k + l_+}$ maps $\Pi^{-l_-}(V_1)$ to $\Pi^{l_+}(V_2)$.
	Adjusting the sets $V_1,V_2$ we may assume that $q_0$ is in the interior of 
	$\Pi^{l_+} (V_2)$ and of $\Pi^{-l_-} (V_1)$.
	For $k$ large  we find that
	$\Pi^{l_-+k+l_+}$ maps $\Pi^{-l_-}(V_1)$ to a set that 
	has an intersection with $\Pi^{-l_-}(V_1)$ near $p_0$ and near $q_0$.
	Write $Q_i = \Pi^i (q_0)$.

	The map $\Psi = \Pi^{l_- + k + l_+}$ on $\Pi^{-l_-} (V_1)$ consists of maps 
	$\Psi_{00}: S_0  \to S_0$,
	$\Psi_{01}: S_0  \to S_1$,
	$\Psi_{10}: S_1  \to S_0$
	and
	$\Psi_{11}: S_1 \to S_1$ (the domains are open sets contained in the indicated cross sections).
	The maps have the form	$\Psi_{00} = \Pi^{l_- + k + l_+}$,
	$\Psi_{01} = G \circ \Pi^{l_- + k}$, 
	$\Psi_{10} = \Pi^{k + l_+} \circ H$,
	$\Psi_{11} = G \circ \Pi^k \circ H$.
	Here $G = \Pi^{l_+}$ is a local diffeomorphism that maps a neighbourhood of
	$Q_{-l_+}$ to a neighbourhood of $q_0$ and
	$H = \Pi^{l_-}$ is a local diffeomorphism that maps a neighbourhood of
	$q_0$ to a neighbourhood of $Q_{l_-}$.
	
	Near $q_0$ in $S_1$ we take coordinates $(x,y,z)$ satisfying \eqref{e:coordnearq}.
	We write all four maps $\Psi_{ij}$, $0\le i,j\le 1$, in cross coordinates, as in Lemma~\ref{l:n}.
	This defines maps
	$\Phi^{+}_{ij} : \mathbb{R}^n \to \mathbb{R}^n$
	with 
	\[
	\Phi^{+}_{ij} (\hat{x}_0,\hat{y}_1,\hat{z}_0) = (\hat{x}_1,\hat{y}_0,\hat{z}_1)
	\] 
	if 
	\[
	\Psi_{ij} (\hat{x}_0,\hat{y}_0,\hat{z}_0)  =(\hat{x}_1,\hat{y}_1,\hat{z}_1).
	\]

	and $(\hat{x}_1,\hat{y}_1,\hat{z}_1) = (x_K,y_K,z_K)$  for $K = l_-+ k + l_+$.
	Recall that by Lemma~\ref{l:n} we have for $i,j = 0,0$, 
	\begin{align*}
	\Phi^{+}_{00}  (\hat{x}_0,\hat{y}_1,\hat{z}_0) &= 
	\left( \begin{array}{c}
	S_{00,x} (\hat{x}_0,\hat{y}_1,\hat{z}_0) \\ S_{00,y}(\hat{x}_0,\hat{y}_1,\hat{z}_0) \\ \hat{z}_0 + S_{00,z}(\hat{x}_0,\hat{y}_1,\hat{z}_0)
	\end{array}\right).
	\end{align*}
	Bounds for the terms on the right-hand side are given in Lemma~\ref{l:n}.
	
	Similar estimates hold for other pairs $i,j$. Consider for instance for $\Phi^{+}_{01}$.
	Note that
	$G$ maps $W^{u}_{loc} (p_a)$ to $W^{u}_{loc} (q_a)$, thus
	$G$ maps $\{x=0,z=a\}$ to $\{x=0,z=a\}$. 
	As further $G(Q_{-l_+}) = (0,0,0)$, we have
	\[
	G((x,y,z) - Q_{-l_+}) = \left( \begin{array}{c} 
	a_{11} x + \mathcal{O}(x)\mathcal{O}(\|(x,y,z\|) 
	\\ 
	a_{21} x + a_{22} y + a_{23}z +\mathcal{O} (\|x,y,z\|^2) 
	\\
	z +  \mathcal{O} (x)  \mathcal{O}(\|(x,y,z\|)
	\end{array}
	\right) 
	\]
	By the implicit function theorem,  
	\begin{align*}
	\Phi^{+}_{01}  (\hat{x}_0,\hat{y}_1,\hat{z}_0)  &= 
	\left( \begin{array}{c}
	S_{01,x} (\hat{x}_0,\hat{y}_1,\hat{z}_0) \\ S_{01,y}(\hat{x}_0,\hat{y}_1,\hat{z}_0) \\ \hat{z}_0 + S_{01,z}(\hat{x}_0,\hat{y}_1,\hat{z}_0)
	\end{array}\right)
	\end{align*}
	with for some $C>0$ independent of $k$, 
	\begin{align*}
	|S_{01,x} (\hat{x}_0,\hat{y}_1,\hat{z}_0)|, |DS_{01,x} (\hat{x}_0,\hat{y}_1,\hat{z}_0)| &\le C  \lambda^{k},
	\\
	|S_{01,y} (\hat{x}_0,\hat{y}_1,\hat{z}_0)|, |DS_{01,y} (\hat{x}_0,\hat{y}_1,\hat{z}_0)| &\le C  \lambda^{k},
	\\
	|S_{01,z} (\hat{x}_0,\hat{y}_1,\hat{z}_0)|, |DS_{01,z} (\hat{x}_0,\hat{y}_1,\hat{z}_0)| &\le C  \lambda^{k/2}.
	\end{align*}
	Such estimates exist also for $\Phi^{+}_{10}$ and $\Phi^{+}_{11}$.
	
	Let $\theta : \mathbb{R}^{2n-1} \to \mathbb{R}$ 
	be a nonnegative test function,
	with $\theta \equiv 1$ near the origin and $\theta \equiv 0$ outside a
	neighbourhood of the origin.
	For $\epsilon > 0$, let $\theta_\epsilon (x) = \theta(x/\epsilon)$.
	Replace $\Pi$ on $S_0$ by $\theta_\epsilon \Pi + (1 - \theta_\epsilon) D\Pi (0,0,0)$.
	By rescaling we may assume $\Pi$ that is unaltered on  $\{ |x| \le 1, |y|\le 1, |z|\le 1\}$ and linear outside
	$\{ |x| \le 2, |y|\le 2, |z|\le 2\}$. 
	We can now consider $\Psi_{00}$ on a uniform neighbourhood $\{ |x|\le 3, |y|\le 3\} \times \mathbb{R}$ of the $z$-axis in $\mathbb{R}^{2n-1}$. 
	Likewise, we extend the local diffeomorphisms $G$ and $H$ to have all
	maps $\Psi_{ij}$ defined on a uniform neighbourhood $\{ |x|\le 3, |y|\le 3\} \times \mathbb{R}$ of the $z$-axis in $\mathbb{R}^{2n-1}$. Moreover, we may assume that  $\Psi_{ij}$ is affine and acts as the identity map on the $z$-coordinate, outside $\{ |x| \le 2, |y|\le 2, |z|\le 2\}$.
	
	As a result we find $\Phi^{+}_{ij}$, $0\le i,j \le 1$, which we will consider on 
	$\{ |x|\le 3, |y|\le 3, |z|\le 3\}$,
	with 
	\begin{align}\label{e:Phi+ij}
	\Phi^{+}_{ij}  (\hat{x}_0,\hat{y}_1,\hat{z}_0) &= 
	\left( \begin{array}{c}
	S_{ij,x} (\hat{x}_0,\hat{y}_1,\hat{z}_0) \\ S_{ij,y}(\hat{x}_0,\hat{y}_1,\hat{z}_0) \\ \hat{z}_0 + S_{ij,z}(\hat{x}_0,\hat{y}_1,\hat{z}_0)
	\end{array}\right)
	\end{align}
	and for some $C>0$ independent of $k$, 
	\begin{align}
	\nonumber
	|S_{ij,x} (\hat{x}_0,\hat{y}_1,\hat{z}_0)|, |DS_{ij,x} (\hat{x}_0,\hat{y}_1,\hat{z}_0)| &\le C  \lambda^{k} |\hat{x}_0|,
	\\
	\nonumber
	|S_{ij,y} (\hat{x}_0,\hat{y}_1,\hat{z}_0)|, |DS_{ij,y} (\hat{x}_0,\hat{y}_1,\hat{z}_0)| &\le C  \lambda^{k} |\hat{y}_1|,
	\\
	\label{e:Sij}
	|S_{ij,z} (\hat{x}_0,\hat{y}_1,\hat{z}_0)|, |DS_{ij,z} (\hat{x}_0,\hat{y}_1,\hat{z}_0)| &\le C  \lambda^{k/2}.
	\end{align}

	In the same way, we define maps	$\Phi^{-}_{ij}$, also considered on	$\{ |x|\le 3, |y|\le 3, |z|\le 3\}$,
	with 
	\[
	\Phi^{-}_{ij} (\hat{x}_0,\hat{y}_1,\hat{z}_1) = (\hat{x}_1,\hat{y}_0,\hat{z}_0)
	\] 
	(so the central $z$-coordinate is treated differently) if 
	\[
	\Psi_{ij} (\hat{x}_0,\hat{y}_0,\hat{z}_0)  =(\hat{x}_1,\hat{y}_1,\hat{z}_1).
	\]
	This can be viewed as cross  coordinates for the inverse map $\Psi^{-1}$. Asymptotic expansions for $\Phi^{-}_{ij}$ are like those for $\Phi^{+}_{ij}$:
	\begin{align}\label{e:Phi-ij}
	\Phi^{-}_{ij}  (\hat{x}_0,\hat{y}_1,\hat{z}_1) &= 
	\left( \begin{array}{c}
	T_{ij,x} (\hat{x}_0,\hat{y}_1,\hat{z}_1) 
	\\ 
	T_{ij,y}(\hat{x}_0,\hat{y}_1,\hat{z}_1) 
	\\ 
	\hat{z}_1 + T_{ij,z}(\hat{x}_0,\hat{y}_1,\hat{z}_1)
	\end{array}\right)
	\end{align}
	with
	\begin{align}
	\nonumber
	|T_{ij,x} (\hat{x}_0,\hat{y}_1,\hat{z}_1)|, |DT_{ij,x} (\hat{x}_0,\hat{y}_1,\hat{z}_1)| &\le C  \lambda^{k} |\hat{x}_0|,
	\\
	\nonumber
	|T_{ij,y} (\hat{x}_0,\hat{y}_1,\hat{z}_1)|, |DT_{ij,y} (\hat{x}_0,\hat{y}_1,\hat{z}_1)| &\le C  \lambda^{k} |\hat{y}_1|,
	\\
	\label{e:Tij}
	|T_{ij,z} (\hat{x}_0,\hat{y}_1,\hat{z}_1)|, |DT_{ij,z} (\hat{x}_0,\hat{y}_1,\hat{z}_1)| &\le C  \lambda^{k/2},
	\end{align}
	for some $C>0$ independent of $k$.

	Now fix $\omega \in \Sigma$.	
	Denote by 		$\mathcal{C}(\mathbb{Z}, \mathbb{R}^{2n-1})$ the space of bounded sequences 
	$\xi:\mathbb{Z} \to \mathbb{R}^{2n-1}$ endowed with the supnorm.
	Consider its subset $\mathcal{C}(\mathbb{Z},\{ |x|\le 3, |y|\le 3, |z|\le 3 \})$ consisting of the sequences 
	$\xi:\mathbb{Z} \to \{ |x|\le 3, |y|\le 3, |z|\le 3\}$.
	Abbreviate
	\[
	\mathcal{C} = \mathcal{C}(\mathbb{Z},\{ |x|\le 3, |y|\le 3, |z|\le 3 \}).
	\]
	For fixed $z_0$ with $|z_0|\le 3$, 
	define 
	\[
	\mathcal{H}: \mathcal{C}  \to \mathcal{C}(\mathbb{Z},\mathbb{R}^{2n-1})
	\] 
	as follows:
	if $\gamma_i = (x_i,y_i,z_i)$ and $\mathcal{H} (\gamma) = \eta$ with 
	$\eta_i = (u_i,v_i,w_i)$, then
	\begin{align}
	\nonumber
	(u_{i+1},v_i,w_{i+1}) &=  
	\Phi^{-}_{\omega_i\omega_{i+1}} (x_i,y_{i+1},z_{i+1}),  \; \textrm{if } i \ge 0,
	\\
	\label{e:mathcalH} 
	(u_{i+1},v_i,w_{i}) &=   \Phi^{+}_{\omega_i\omega_{i+1}} (x_i,y_{i+1},z_i),  \;  \textrm{if } i < 0.
	\end{align}
	for $i \in \mathbb{Z}$, and $w_0 = z_0$.

	Orbits of $\Pi$ are fixed points of $\mathcal{H}$. 
	By \eqref{e:Phi+ij}, \eqref{e:Phi-ij}, the estimates \eqref{e:Sij}, \eqref{e:Tij},  and the fact that $\mathcal{H}$ 
	acts as the identity map on the $z$-coordinate outside $\{|x|\le 2, |y|\le 2, |z|\le 2\}$,
	we find that 	$\mathcal{H}$ maps
	$\mathcal{C}$ into itself:
	\[
	\mathcal{H} (\mathcal{C})  \subset \mathcal{C}.
	\]
	The map $\mathcal{H}$ is, however, not a contraction on  $\mathcal{C}$, due to the existence of a central direction. To remedy this,
	we find a contraction by using scaled Banach spaces as in  \cite{van89}. 
	Write $\mathcal{C}_\alpha(\mathbb{Z},\mathbb{R}^{2n-1})$ for the set of sequences
	$\mathbb{Z} \to \mathbb{R}^{2n-1}$ 
	with $\sup_{i \in \mathbb{Z}} \alpha^{-|i|} \|\gamma(i)\| < \infty$ and
	equipped with the norm
	\[
	\|\gamma\|_\alpha = \sup_{i \in \mathbb{Z}} \alpha^{-|i|} \|\gamma(i)\|.
	\]
	The set $\mathcal{C}_\alpha (\mathbb{Z},\{ |x|\le 3, |y|\le 3, |z|\le 3\} )$ consist
	of those sequences $\gamma \in \mathcal{C}_\alpha(\mathbb{Z},\mathbb{R}^{2n-1})$ with $\gamma_i \in 
	\{ |x|\le 3, |y|\le 3, |z|\le 3\}$.  Abbreviate 
	\[
	\mathcal{C}_\alpha = \mathcal{C}_\alpha(\mathbb{Z},\{ |x|\le 3, |y|\le 3, |z|\le 3\}).
	\]
	
	Let $\alpha>1$ be fixed and close to $1$ and 
	consider $\mathcal{H}$ on  $\mathcal{C}_\alpha$.	
	From \eqref{e:mathcalH} we find, for $i \ge 0$, 
	\begin{align*}
	u_i &=  S_{\omega_{i-1}\omega_i} ( x_{i-1}, y_{i}, z_{i-1}),
	\\
	v_i &= S_{\omega_{i}\omega_{i+1}} ( x_i, y_{i+1}, z_i),
	\\
	w_i &= w_{i-1} + S_{\omega_{i-1}\omega_i} ( x_{i-1}, y_{i}, z_{i-1}).
	\end{align*}
	For $i<0$ there are similar  expressions of the form
	\begin{align*}
	u_i &=  T_{\omega_{i-1}\omega_i} ( x_{i-1}, y_{i}, z_{i}),
	\\
	v_i &= T_{\omega_{i}\omega_{i+1}} ( x_i, y_{i+1}, z_{i+1}),
	\\
	w_{i} &= w_{i+1} + T_{\omega_{i}\omega_{i+1}} ( x_{i}, y_{i+1}, z_{i+1}).
	\end{align*}
	One easily checks from this,  using $\alpha>1$ and \eqref{e:Phi+ij}, \eqref{e:Sij},  \eqref{e:Phi-ij}, \eqref{e:Tij},  that 
	$\mathcal{H}$ is a contraction on $\mathcal{C}_\alpha$.
	The map $\mathcal{H}$ therefore possesses a unique fixed point $\zeta$; 
	$\zeta(i)$ is an orbit for $\Psi$.
	Write  $w(z_0) = \zeta(0)$. This gives $w:  \{|z|\le 3\} \to \{ |x| \le 3, |y| \le 3\}$.
	It follows from the reasoning in \cite{van89}
	that $w$ is continuously differentiable. 	
	(Apart from the nonessential difference that the expression of the contraction is not the same,  the additional structure consists of a dependence on $\omega \in \Sigma$.  We do not have a single map but a finite number of maps $\Phi^\pm_{ij}$ coded by $\omega$ and that appear in the expressions, but the reasoning in \cite{van89} can be followed.)	
	Define
	\[
	W^{c} (\omega) = \bigcup_{z_0 \in \mathbb{R}} w(z_{0});
	\]
	$W^{c}(\omega)$ is the sought for center manifold.
	By construction, $W^c (\omega)$ (intersected  with $S$) is contained in $S_{\omega_0}$ and satisfies
	\[
	\Psi (W^c(\omega)) \subset W^c (\sigma \omega).
	\]
	
	Note that $ \alpha^{-|n|} \|\gamma_n\| \to 0$ as $|n|\to \infty$, for 
	$\gamma \in \mathcal{C}_\alpha$.
	Therefore $\mathcal{H}: \mathcal{C}_\alpha \to \mathcal{C}_\alpha$   
	depends continuously on $\omega \in \Sigma$. It follows that $w$ depends continuously on $\omega$.
	The contraction $\mathcal{H}: \mathcal{C}_\alpha \to \mathcal{C}_\alpha$ is not continuously differentiable, but it is continuously differentiable
	when considering 
	$\mathcal{H}: \mathcal{C}_{\alpha'}  \to \mathcal{C}_{\alpha}$ for $\alpha ' < \alpha$. 
	See \cite{van89}, also to see that this implies that $w$ is continuously differentiable.
	As the derivatives of $\mathcal{H}: \mathcal{C}_{\alpha'}  \to \mathcal{C}_{\alpha}$ depend continuously on $\omega$, the derivative of  $w$ depends continuously on $\omega$.
\end{proof}

\begin{remark}
 A simplified version of the above proof in which the center direction is ignored, gives an analytical proof based on cross coordinates of the existence of a horseshoe near homoclinic tangles of general systems.
 The usual proof, as in  \cite{kathas95}, deploys invariant cone fields.
 \end{remark}
 
\begin{proof}[Proof of Proposition~\ref{p:skew}]
	The proof of Proposition~\ref{p:Vn} introduces sets $V_1, V_2 = \Pi^k (V_1)$ near $p_0$.
	Together with iterates $\Pi^{j}(V_1)$, $ -l_-\le j \le -1$ and $\Pi^j (V_2)$, $1\le j \le l_+$, this defines a small neighbourhood $V$ of the closure of the orbit under $\Pi$ of $q_0$. 
	
	With $K = l_-+k+l_+$ as in 
	the proof of  Proposition~\ref{p:Vn} we observe that  $\mathcal{V}_k = V \cap \Pi^{K} (V)$ is a small neighbourhood of $p_0 \cup q_0$. 	
	The iterate $\Psi = \Pi^{K}$  possesses an invariant center lamination 
	inside $V \cap \Pi^{K} (V)$ with leaves $W^c (\omega)$, $\omega \in\Sigma$.

	Consider the fiber map from $W^c (\omega)$ to $W^c (\sigma \omega)$. To fix thoughts, assume $\omega_0 = \omega_1 = 0$,
	so that the fiber map is obtained by iterating the local map $\Pi$ on $S_0$. The other possible cases are treated similarly.
	Lemma~\ref{l:n} provides expressions for points $u = (x_0,y_0,z_0) \in W^c (\omega)$ and $\Pi^{K} (u) = (x_K,y_K,z_K) \in W^c (\sigma \omega)$. 
	Now $u \in  W^c (\omega)$ gives $x_0,y_0$ as function of $z_0$ and $\Pi^{K} (u) \in W^c (\sigma \omega)$ gives
	$x_K,y_K$ as function of $z_K$.
	By the implicit function theorem we solve  
	\begin{align*}z_K &= z_0 + R_z (x_0(z_0),y_K (z_K),z_0)
	\end{align*} from Lemma~\ref{l:n} for
	$z_K$ as differentiable function of $z_0$.
	For the derivative we have
	\begin{align*}
	\frac{d z_K }{dz_0}  &= 1 + \frac{\partial  R_z}{\partial x_0}  \frac{d x_0}{dz_0} +    \frac{\partial  R_z}{\partial y_K}  \frac{d y_K}{dz_K} \frac{d z_K}{d z_0} + \frac{\partial  R_z}{\partial z_0},
	\end{align*}
	with $R_z$ calculated in $(x_0(z_0),y_K (z_K),z_0)$.
	The tangent lines of the center leaves go to zero as $k\to \infty$. 
	The estimates in Lemma~\ref{l:n} imply that also $R_z$ and its derivatives converge to zero as $k\to\infty$, and thus 
	show that $z_K$ converges to the identity in $C^1$ as $k \to \infty$.
	
	Next, we proceed to rescale the $z$-coordinate so that it becomes defined on $J$, and fiber maps $f_\omega$
	converge to the identity map on $J$ as $k\to \infty$. 
	As the tangent lines of the center leaves vary continuously with the base point, the convergence is uniform in $\omega \in \Sigma$.
	We conclude that $F_k$ converges to $I$ in
	$\mathcal{C}^1 (\Sigma \times J)$ as $k \to \infty$.
\end{proof}

We note that as $k$ in  Proposition~\ref{p:skew} increases (and the neighbourhood $\mathcal{V}_k$ decreases), the number of iterates $K$ goes to infinity. Lemma~\ref{l:n} shows that this only facilitates the estimates for the return map
and leads to the convergence of $F_k$ to  $I$ as $k\to \infty$.

The following result implies that the center foliation is normally hyperbolic.

\begin{proposition}\label{p:strongstable}
	For sufficiently small  neighbourhoods $\mathcal{V}_k$ of $\{ p_a\}\cup \{q_a\}$
	as above, there exists a center stable lamination $\mathcal{F}^{sc}$ inside $\mathcal{V}_k$ 
	with $n-1$ dimensional leaves, containing the local center stable manifolds $W^{sc}_{loc} (\{p_a\})$
	and  $W^{sc}_{loc} (\{q_a\})$, so that 
	\begin{enumerate}[(i)]
	\item center stable leaves are foliated by local strong stable manifolds,
	\item the strong stable lamination $\mathcal{F}^s$ formed by the union of the strong stable manifolds is locally invariant, 
	\item
	the tangent spaces of the strong stable lamination depend continuously on the point.
	\end{enumerate}
	Corresponding statements also hold for a strong unstable lamination $\mathcal{F}^u$.
\end{proposition}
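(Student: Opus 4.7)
The plan is to construct the strong stable lamination $\mathcal{F}^s$ by adapting the fixed-point argument from the proof of Proposition~\ref{p:Vn} to one-sided forward sequences, and then to obtain $\mathcal{F}^{sc}$ as a union of strong stable leaves compatible with $\mathcal{F}^c$. Each strong stable leaf will be $(n-1)$-dimensional and realised as the graph of a function $y = h(x)$ over a small $(n-1)$-dimensional disc $D$ in the $x$-direction, using the cross coordinates from Proposition~\ref{p:Vn}.

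Given $\omega \in \Sigma$ and a point $v \in W^c(\omega)$ with forward $\Psi$-orbit $v_i = \Psi^i(v)$, $i \ge 0$, I look for the set of points $u$ whose forward orbit $(x_i, y_i, z_i) = \Psi^i(u)$ satisfies $\|(x_i,y_i,z_i) - v_i\| \to 0$ exponentially. Introduce the scaled Banach space $\mathcal{C}^+_\mu$ of forward sequences $\gamma : \mathbb{Z}_{\ge 0} \to \{|x| \le 3, |y| \le 3, |z| \le 3\}$ with the norm $\|\gamma\|_\mu = \sup_{i \ge 0} \mu^{-i} \|\gamma_i - v_i\|$ for some weight $\mu$ with $\lambda^{1/2} < \mu < 1$. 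Using the forward cross-coordinate maps $\Phi^+_{\omega_i \omega_{i+1}}$ and prescribing $x_0 = \xi \in D$ as a parameter, I set up a fixed-point operator $\mathcal{H}^+_{\omega,v,\xi}$ on $\mathcal{C}^+_\mu$ analogous to $\mathcal{H}$ in the proof of Proposition~\ref{p:Vn}. The estimates \eqref{e:Sij} force $\mathcal{H}^+_{\omega,v,\xi}$ to be a contraction on $\mathcal{C}^+_\mu$ for $k$ sufficiently large, and the unique fixed point determines a value $y_0 = h_{\omega,v}(\xi)$. The graph $\mathcal{F}^s_v = \{(\xi, h_{\omega,v}(\xi), v_{0,z}) : \xi \in D\}$ is then the strong stable leaf through $v$.

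Local invariance follows by the usual shift argument: the one-step shift of a fixed point of $\mathcal{H}^+_{\omega,v,\xi}$ is a fixed point of $\mathcal{H}^+_{\sigma\omega,\Psi(v),\xi'}$ with $\xi'$ the $x$-component of $\Psi(u)$, so $\Psi(\mathcal{F}^s_v) \subset \mathcal{F}^s_{\Psi(v)}$ whenever the iterate remains in $\mathcal{V}_k$. The center stable leaves of $\mathcal{F}^{sc}$ are then defined as $\bigcup_{v \in W^c(\omega)} \mathcal{F}^s_v$; by uniqueness of the contraction fixed point, $W^{sc}_{\mathrm{loc}}(\{p_a\})$ and $W^{sc}_{\mathrm{loc}}(\{q_a\})$ are recovered as such unions over $\{p_a\} = \mathcal{F}^c_{p_0}$ and $\{q_a\} = \mathcal{F}^c_{q_0}$, respectively. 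The strong unstable lamination $\mathcal{F}^u$ is obtained symmetrically, either from the backward maps $\Phi^-_{\omega_i \omega_{i+1}}$ and the bounds \eqref{e:Tij}, or directly from $\mathcal{F}^u = R \mathcal{F}^s$ by reversibility.

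Continuity of the tangent spaces $T_u \mathcal{F}^s_u$ in $u$ is handled by the Vanderbauwhede trick already used in Proposition~\ref{p:Vn}: although $\mathcal{H}^+_{\omega,v,\xi}$ fails to be $C^1$ as an endomorphism of $\mathcal{C}^+_\mu$, it is $C^1$ when viewed as a map $\mathcal{C}^+_{\mu'} \to \mathcal{C}^+_\mu$ for any $\lambda^{1/2} < \mu' < \mu < 1$, which suffices to differentiate the fixed point in the parameters $(\xi, v, \omega)$ with continuous dependence. The principal technical obstacle is that the bound on the $z$-component in \eqref{e:Sij} is only $C \lambda^{k/2}$ rather than $C\lambda^k$, so the admissible weights $\mu$ are restricted to the window $(\lambda^{1/2}, 1)$, and $k$ must be chosen large enough to beat this weaker decay simultaneously while securing the $C^1$ regularity uniformly in $\omega \in \Sigma$.
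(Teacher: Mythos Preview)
Your overall strategy---constructing $\mathcal{F}^s$ directly via a one-sided contraction with a \emph{stable} weight $\mu\in(\lambda^{1/2},1)$ and then assembling $\mathcal{F}^{sc}$ as the union of strong stable leaves over $\mathcal{F}^c$---is a legitimate alternative to the paper's route. The paper instead prescribes both $x_0$ and $z_0$ in its one-sided operator $\mathcal{H}^+$ (so the weight can remain $\alpha>1$, exactly as in Proposition~\ref{p:Vn}), which produces the center-stable leaves directly; it then carves out the strong-unstable (and, by reversibility, strong-stable) foliation by a Grassmannian graph-transform argument: $\Psi$ is lifted to $\Psi^{(1)}(x,\alpha)=(\Psi(x),D\Psi(x)\alpha)$ on the bundle of $(n-1)$-planes, and a trial plane field is iterated to convergence. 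Your Vanderbauwhede-style $C^1$ argument would, if it went through, deliver the same tangent-space continuity without leaving the functional-analytic setup already in place, which is a genuine economy.

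There is, however, a concrete gap in the $z$-direction. With $\mu<1$ and the \emph{forward} cross-coordinate maps $\Phi^+$ (in which $w_{i+1}$ is computed from $z_i$), the operator $\mathcal{H}^+_{\omega,v,\xi}$ is \emph{not} a contraction on $\mathcal{C}^+_\mu$: the neutral $z$-coordinate propagates with Lipschitz factor essentially $1$, so after reweighting by $\mu^{-i}$ the $z$-contribution gains a factor $\mu^{-1}>1$ per step, and the accumulated $z$-error behaves like $\mu^{-i}\sum_{j<i}\mu^{j}$, which is unbounded as $i\to\infty$. The remedy is to propagate $z$ backward---define $w_i$ from $z_{i+1}$ via the $\Phi^-$-type relation---so that each step gains a factor $\mu<1$ instead; this does give a contraction for $k$ large. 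But then $z_0$ is no longer a free datum: it is determined by the fixed point together with $y_0$, and the strong stable leaf through $v$ is the graph $\xi\mapsto (h_y(\xi),h_z(\xi))$ over $D$, not the constant-$z$ slice you wrote. Your formula $\mathcal{F}^s_v=\{(\xi,h_{\omega,v}(\xi),v_{0,z}):\xi\in D\}$ is therefore incorrect in general (though it is $O(\lambda^{k/2})$-close to the true leaf for large $k$).
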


\begin{proof}
	We note first that a strong stable lamination is constructed by the same methodology used in the proof of Proposition~\ref{p:Vn} to construct a center lamination.
	In fact it suffices to replace $\mathcal{H}$ defined in \eqref{e:mathcalH} by
	\[
	\mathcal{H}^+: \mathcal{C} \to \mathcal{C}(\mathbb{N},\mathbb{R}^{2n-1})
	\] 
	given by the identity:
	if $\gamma_i = (x_i,y_i,z_i)$ and $\mathcal{H}^+ (\gamma) = \eta$ with 
	$\eta_i = (u_i,v_i,w_i)$, then
	\begin{align}
	\nonumber
	(u_{i+1},v_i,w_{i+1}) &=  
	\Phi^{-}_{\omega_i\omega_{i+1}} (x_i,y_{i+1},z_{i+1}), 
	\end{align}
	for $i \in \mathbb{N}$, and $u_0 = x_0\in \mathbb{R}^{n-1}$,  $w_0 = z_0 \in \mathbb{R}$.
	The center unstable lamination is constructed similarly, and the center lamination is in fact the intersection of the center stable lamination and the center unstable lamination.
	
	We continue with the strong unstable manifolds.
	One constructs a bundle of tangent spaces of strong unstable leaves and shows that these integrate to form
	a strong unstable foliation of center unstable leaves.
	Denote by $G^{n-1}(\mathbb{R}^{2n-1})$ the Grassmannian manifold
	of $(n-1)$ dimensional planes in $\mathbb{R}^{2n-1}$.
	A strong unstable foliation is  determined by its tangent bundle, hence by a section $\mathcal{F}^{cu} \to G^{n-1}(\mathbb{R}^{2n-1})$.
	Extend $\Psi$ to  $\Psi^{(1)}$ on the bundle
	$\mathbb{R}^{2n-1} \times G^{n-1}(\mathbb{R}^{2n-1})$ of $(n-1)$ dimensional 
	planes in $\mathbb{R}^{2n-1}$ over $\mathbb{R}^{2n-1}$ by
	\[
	\Psi^{(1)} (x, \alpha) = (\Psi(x), D\Psi(x)\alpha).
	\]
	The bundle $E^s$ of unstable directions over $\{p_a\}$ and over $\{q_a\}$
	is fixed under $\Psi^{(1)}$.

	Proposition~\ref{p:Vn} provides a strong unstable lamination $\mathcal{F}^{cu}$ on $\{ |x|\le 2, |y|\le 2, |z|\le 2\}$.
	A direct computation (compare \cite{hirpugshu77})
	shows that $\Psi^{(1)}$ is stable within the fibers
	$G^{n-1}(\mathbb{R}^{2n-1})$.
	Therefore iteration by $\Psi$ of a suitable trial foliation on 
	$\mathcal{F}^{cu}$ converges to an invariant
	unstable lamination. 	
\end{proof}	
\subsection*{Acknowledgements}
DT acknowledges support from Leverhulme Trust grant RPG 2021-072.
JSWL acknowledges support from JST Aihara Moonshot R \& D Grant Number JPMJMS2021 and 
EPSRC research grants EP/S023925/1 and EP/S515085/1.

\end{document}